\providecommand{\tabularnewline}{\\}
\numberwithin{equation}{section}
\numberwithin{figure}{section}
\theoremstyle{plain}
\newtheorem{thm}{\protect\theoremname}
\theoremstyle{plain}
\newtheorem{prop}[thm]{\protect\propositionname}
\theoremstyle{plain}
\newtheorem{cor}[thm]{\protect\corollaryname}
\theoremstyle{remark}
\newtheorem{rem}[thm]{\protect\remarkname}
\providecommand{\corollaryname}{Corollary}
\providecommand{\propositionname}{Proposition}
\providecommand{\remarkname}{Remark}
\providecommand{\theoremname}{Theorem}
\begin{document}
\title{Corrigendum to ``Lens spaces, isospectral on forms but not on functions''
}
\author{Ruth Gornet and Jeffrey McGowan}

\maketitle
This note means to correct an error by the authors for the composite
$q$ case in the paper {\em Lens Spaces, Isospectral on Forms but
not on Functions,} published in \textit{LMS J. Comput. Math.} 9 (2006),
270\textendash 286. We would like to thanks Sebastian Boldt and Emilio Lauret for pointing this error out.  All calculations and examples presented in \cite{GM}
for prime $q$ remain valid, and we include detailed calculations
below justifying this. Our original mistake was to conclude that Formula
(3.11) \cite[p. 399]{Ikeda} remained true for all $q$ when in fact
it is only valid if $q$ is prime. This means formulas (3) and (4)
in \cite{GM} must be reworked to account for complications when $q$
is composite. We provide below the revised formulas for the specific
composite cases of $q$ a prime power and $q$ a product of distinct
primes. Analogous formulas for any composite case can be computed
in a similar fashion.

The computations have been re-run on computers, and new $q$ prime
examples have been generated that satisfy all the claimed profcoclperties
from \cite{GM}. These are given in detail in Section 2. The computer
code is provided in an appendix. The computer code has been corrected
for the composite $q$ cases listed above, but memory storage limitations
have been reached and no composite $q$ examples of sporadically $p$-isospectral
lens spaces have yet been generated. The authors are actively working
toward generating new examples in the $q$ composite case.

We should note that there has been follow up investigation of the $p$-form spectrum for lens spaces, see for example \cite{L1,LMP}

\section{Correction and/or Justification of Generating Function Formulas}

In this section, we correct and justify in the case of $q$ composite
and justify in the case of $q$ prime formulas (3) and (4) from \cite{GM}. 

Let $\mathbb{R}^{2n}$ $\left(n\geq2\right)$ be $2n$-dimensional
Euclidean space and let $S^{2n-1}$ be the unit sphere centered at
the origin. Let $g\in O\left(2n\right)$ generate a finite, fixed-point
free cyclic subgroup $G$ of $O\left(2n\right),$ which acts on $S^{2n-1.}$

From Formula (2.13) in \cite[p. 394]{Ikeda}, the generating function
for $G$ is given by
\begin{align*}
F_{G}^{p}\left(z\right)= & \left|G\right|^{-1}\sum_{b=0}^{p}\left(-1\right)^{b}\left(z^{-b}-z^{b+2}\right)\sum_{t=1}^{q}\frac{\chi^{p-b}\left(g^{t}\right)}{\det\left(z-g^{t}\right)}+\left(-1\right)^{p+1}z^{-p}\\
= & \left|G\right|^{-1}\sum_{b=0}^{p}\left(-1\right)^{b}\left(z^{-b}-z^{b+2}\right)F^{p-b}\left(G:z\right)+\left(-1\right)^{p+1}z^{-p},
\end{align*}
where $\chi^{p}$ denotes the character of the natural representation
of $G$ on $\Lambda^{p},$ and 
\[
F^{p}\left(G:z\right)=\sum_{t=1}^{q}\frac{\chi^{p}\left(g^{t}\right)}{\det\left(z-g^{t}\right)}.
\]
We abuse notation slightly by denoting $zI_{2k}-g^{t}$ by $z-g^{t}.$ 
\begin{prop}
\cite[Prop 2.1 (1), p. 391]{Ikeda}, Two distinct cyclic subgroups $G$
and $G'$ yield lens spaces with the same $p$-form spectrum if and
only if
\[
F_{G}^{p}\left(z\right)=F_{G'}^{p}\left(z\right)
\]
and 
\[
F_{G}^{p-1}\left(z\right)=F_{G'}^{p-1}\left(z\right).
\]
\end{prop}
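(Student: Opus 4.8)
The plan is to reduce the biconditional to a comparison of eigenvalue multiplicities, using the fact that $F_{G}^{p}(z)$ is a generating function encoding the spectrum of the Hodge--de Rham Laplacian on coclosed $p$-forms of the lens space $L=S^{2n-1}/G$. First I would recall the spectral resolution of the Laplacian on $p$-forms of the round sphere $S^{2n-1}$: its eigenspaces are finite-dimensional $O(2n)$-modules and its eigenvalues are explicit functions of $p$ and a level $k$ that depend only on the dimension, not on $G$. A $p$-form on $L$ lifts to a $G$-invariant $p$-form upstairs, and since $G$ acts by isometries the Laplacian commutes with it; hence the $\lambda$-eigenspace of $L$ is exactly the $G$-fixed subspace of the corresponding eigenspace on $S^{2n-1}$. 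Its dimension is computed by averaging the character, $\dim V^{G}=|G|^{-1}\sum_{t=1}^{q}\chi_{V}(g^{t})$, which is precisely the source of the sum $\sum_{t=1}^{q}\chi^{p}(g^{t})/\det(z-g^{t})$ and the prefactor $|G|^{-1}$ in the formula for $F_{G}^{p}(z)$, the expansion of $1/\det(z-g^{t})$ in $z^{-1}$ supplying the symmetric-power characters that build up the eigenspaces level by level. Reading off the Laurent coefficients of this Molien-type series thus recovers the multiplicities of the coclosed $p$-form eigenvalues of $L$ one level at a time; since the eigenvalues themselves are identical for $G$ and $G'$, the coclosed $p$-spectra of the two lens spaces agree if and only if $F_{G}^{p}(z)=F_{G'}^{p}(z)$.

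Next I would invoke Hodge theory to pass from the coclosed spectrum to the full $p$-form spectrum. On a closed manifold $\Omega^{p}$ splits as exact $\oplus$ coexact $\oplus$ harmonic, and the exterior derivative restricts to an eigenspace isomorphism from the positive-eigenvalue coexact $(p-1)$-forms onto the exact $p$-forms. Hence the $p$-form spectrum equals, with multiplicity, the union of the coclosed $p$-spectrum and the positive-eigenvalue coclosed $(p-1)$-spectrum, together with the zero eigenvalue of topological multiplicity $b_{p}(L)$. Because the Betti numbers of a lens space depend only on its dimension, the full $p$-form spectra of $L$ and $L'$ coincide exactly when both the coclosed $p$-spectra and the coclosed $(p-1)$-spectra coincide; together with the previous paragraph this is the asserted equivalence with the pair $F_{G}^{p}=F_{G'}^{p}$ and $F_{G}^{p-1}=F_{G'}^{p-1}$. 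The additive term $(-1)^{p+1}z^{-p}$ in the definition is the bookkeeping that ensures this harmonic contribution is counted once rather than duplicated across degrees.

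The step I expect to be the main obstacle is pinning down exactly which graded piece of the spectrum the series captures and matching it to the alternating combination $\sum_{b=0}^{p}(-1)^{b}(z^{-b}-z^{b+2})F^{p-b}(G:z)$. The functions $F^{p-b}(G:z)$ count all invariant $(p-b)$-forms, and one must check that this telescoping inclusion--exclusion isolates precisely the coclosed part in degree $p$, separating it from the exact part pulled back from degree $p-1$ by $d$, while the factor $z^{-b}-z^{b+2}$ correctly tracks the shift in eigenvalue level. Once this identification is secured the equivalence is immediate, since equality of the rational generating functions is the same as equality of all their Laurent coefficients, hence equality of all multiplicities in the relevant degrees.
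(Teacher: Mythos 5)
The paper does not actually prove this proposition; it is quoted directly from Ikeda \cite[Prop.\ 2.1 (1)]{Ikeda}, so the relevant comparison is with Ikeda's original argument. Your sketch reproduces the overall architecture of that argument faithfully: eigenspaces of the round sphere as $O(2n)$-modules, multiplicities on the quotient via the character-averaging formula $\dim V^G=|G|^{-1}\sum_{t=1}^q\chi_V(g^t)$, the resulting Molien-type series, and the Hodge-theoretic passage between the coclosed spectra in degrees $p$ and $p-1$ and the full $p$-form spectrum. The ``if'' direction of the equivalence goes through exactly as you outline, and your treatment of the harmonic contribution via the Betti numbers of lens spaces is correct.

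There is, however, a genuine gap in your ``only if'' direction, at the sentence asserting that the full $p$-form spectra coincide ``exactly when'' both the coclosed $p$-spectra and the coclosed $(p-1)$-spectra coincide. Hodge theory gives you that the full $p$-form spectrum is the \emph{multiset union} of the coclosed $p$-spectrum and the positive coclosed $(p-1)$-spectrum, but equality of two multiset unions does not by itself force equality of the summands: a priori $L$ could carry excess coclosed $p$-multiplicity at some eigenvalue $\lambda$ exactly compensated by a deficit of coclosed $(p-1)$-multiplicity at the same $\lambda$, leaving the union unchanged. Ruling this out requires the explicit eigenvalue formulas on $S^{2n-1}$, which you invoke only in passing: the coclosed $p$-eigenvalues and coclosed $(p-1)$-eigenvalues form two \emph{distinct} quadratic sequences in the level $k$, and one must show their coincidences $\lambda_{k,p}=\lambda_{k',p-1}$ are sparse enough (a number-theoretic condition on the levels, with fixed dimension and degree) that the two families of multiplicities can be disentangled and equality of the union splits into equality of each generating function separately. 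This is precisely where Ikeda's proof does its real work, and your proposal never supplies it, so as written the biconditional is established in only one direction. By contrast, your deferral of the alternating-sum identity $F_G^p(z)=|G|^{-1}\sum_{b=0}^p(-1)^b\left(z^{-b}-z^{b+2}\right)F^{p-b}(G:z)+(-1)^{p+1}z^{-p}$ is harmless here: the present paper simply takes that identity from Ikeda's (2.13) as the working definition of $F_G^p$, so it need not be reproved for this proposition, whereas the union-splitting step cannot be deferred.
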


Let $\varPsi_{q}\left(z\right)$ denote the cylclotomic polynomial
of $q.$ So letting $\eta$ run through all primitive $q$th roots
of unity except $1,$ $\Psi_{q}\left(z\right)$ is defined as 
\begin{align*}
\varPsi_{q}\left(z\right):= & \prod_{\eta}\left(z-\eta\right).
\end{align*}

Let $q$ be a positive integer and denote 
\[
RP_{q}=\left\{ \pm t\;|\;t\in\mathbb{N},t<\frac{q}{2},\left(t,q\right)=1\right\} \subseteq\left(-\frac{q}{2},\frac{q}{2}\right),
\]
 a complete representative of integers relatively prime to $q$ that
is contained in the interval $\left(-\frac{q}{2},\frac{q}{2}\right).$
When we wish the representatives to be non-negative, we denote
\[
RP'_{q}=\left\{ t\;|\;t\in\mathbb{N},0<t<q,\left(t,q\right)=1\right\} =RP_{q}+\frac{q}{2}\subseteq\left(0,q\right),
\]
We also denote, for real valued $v,$ 
\[
\textrm{Rot}\left(\nu\right)=\left(\begin{array}{cc}
\cos\left(2\pi v\right) & \sin\left(2\pi\nu\right)\\
-\sin\left(2\pi\nu\right) & \cos\left(2\pi\nu\right)
\end{array}\right).
\]
Note that $\textrm{Rot}\left(\nu\right)$ has eigenvalues $\exp\left(\pm2\pi i\nu\right),$
and $\left(\textrm{Rot}\left(\nu\right)\right)^{t}=\textrm{Rot}\left(t\nu\right).$ 

We choose $R=\left\{ r_{1},r_{2},\ldots,r_{n}\right\} \subseteq RP_{q}$
such that $R\cap\left(-R\right)=\textrm{�}$ modulo $q,$ that is,
the set $R$ does not contain an element and its additive inverse,
modulo $q.$ Here $-R=\left\{ -r_{1},-r_{2},\ldots,-r_{n}\right\} .$
The set $R$ determines $g\in O\left(2n\right)$ by 
\[
g=\left(\begin{array}{cccc}
\textrm{Rot}\left(r_{1}/q\right) & 0 & \cdots & 0\\
0 & \textrm{Rot}\left(r_{2}/q\right) & \cdots & 0\\
\vdots & \vdots & \ddots & 0\\
0 & 0 & \cdots & \textrm{Rot}\left(r_{n}/q\right)
\end{array}\right).
\]
Once $n$ is chosen, we define $k$ by 
\[
\Phi\left(q\right)=2n+2k,
\]
where $\Phi\left(q\right)$ is the Euler $\Phi$-function of $q,$
also known as the Euler totient function of $q.$ Euler's totient
function counts the positive integers up to $q$ that are relatively
prime to $q.$ Recall that $\Phi\left(q\right)$ is even if $q\geq3.$ 

Once $R$ is chosen, we then choose $S=\left\{ s_{1},s_{2},\ldots,s_{k}\right\} \subseteq RP-\pm R$
such that $S\cap\left(-S\right)=\textrm{�}$ modulo $q.$ The set
$S$ determines $\bar{g}\in O\left(2k\right)$by 
\[
\bar{g}=\left(\begin{array}{cccc}
\textrm{Rot}\left(s_{1}/q\right) & 0 & \cdots & 0\\
0 & \textrm{Rot}\left(s_{2}/q\right) & \cdots & 0\\
\vdots & \vdots & \ddots & 0\\
0 & 0 & \cdots & \textrm{Rot}\left(s_{k}/q\right)
\end{array}\right).
\]
We denote $\pm R=\left\{ \pm r_{1},\pm r_{2},\ldots,\pm r_{n}\right\} ,$
which contains exactly $2n$ elements modulo $q,$ and $\pm S=\left\{ \pm s_{1},\pm s_{2},\ldots,\pm s_{k}\right\} ,$
which contains exactly $2k$ elements modulo $q.$ With these assumptions,
$RP_{q}=\pm R\cup\pm S,$ with $\pm R$ and $\pm S$ disjoint. 

From the relation between determinant and the natural representation
of $O\left(2n\right)$ on $\Lambda^{p}\left(\mathbb{R}^{2n}\right)$,
it is standard to compute (see \cite[(2.16)]{Ikeda}, with corrected
exponent of $-1$ ) 
\begin{equation}
\det\left(z-g^{t}\right)=\sum_{a=0}^{2n}\left(-1\right)^{a}\chi^{a}\left(g^{t}\right)z^{a}\label{eq:DET}
\end{equation}
and 
\begin{equation}
\det\left(z-\bar{g}^{t}\right)=\sum_{a=0}^{2k}\left(-1\right)^{a}\chi^{a}\left(\bar{g}^{t}\right)z^{a}.\label{eq:DET-bar}
\end{equation}

Let $\varPsi_{q}\left(z\right)$ denote the cylclotomic polynomial
of $q.$ Letting $\eta$ run through all primitive $q$th roots of
unity except $1,$ $\Psi_{q}\left(z\right)$ is defined as 
\begin{align*}
\varPsi_{q}\left(z\right):= & \prod_{\eta}\left(z-\eta\right)
\end{align*}

Also note that 
\[
\chi^{p}\left(I_{2n}\right)=\left(\begin{array}{c}
2n\\
p
\end{array}\right).
\]

\begin{prop}
With the above notation, we have
\[
\begin{cases}
\det\left(z-g^{t}\right)\det\left(z-\bar{g}^{t}\right)=\varPsi_{q}\left(z\right), & \textrm{if }\left(t,q\right)=1\\
\det\left(z-g^{t}\right)\det\left(z-\bar{g}^{t}\right)=\left(\varPsi_{q_{j}}\left(z\right)\right)^{q_{i}-1}, & \textrm{if }q=q_{1}q_{2},\left(t,q_{i}\right)\neq1,q_{1}\neq q_{2}\\
\det\left(z-g^{t}\right)\det\left(z-\bar{g}^{t}\right)=\left(\varPsi_{q_{1}^{m-m'}}\left(z\right)\right)^{q_{1}^{m'}}, & \textrm{if }q=q_{1}^{m},\textrm{ and }1\neq\left(t,q_{1}\right)=q_{1}^{m'}<q_{1}^{m}
\end{cases}
\]
\end{prop}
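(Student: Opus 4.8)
The plan is to reduce both characteristic polynomials to a single product of linear factors indexed by $RP_q$, and then to track how multiplication by $t$ acts on the residues coprime to $q$. First I would record the eigenvalue data: since $\left(\mathrm{Rot}(\nu)\right)^{t}=\mathrm{Rot}(t\nu)$ has eigenvalues $e^{\pm2\pi i t\nu}$, the block-diagonal matrix $g^{t}$ has eigenvalues $e^{2\pi i t a/q}$ as $a$ ranges over $\pm R$, and $\bar g^{t}$ has eigenvalues $e^{2\pi i t a/q}$ as $a$ ranges over $\pm S$. Because $RP_q=\pm R\cup\pm S$ is a disjoint union, multiplying the two determinants gives
\[
\det\left(z-g^{t}\right)\det\left(z-\bar g^{t}\right)=\prod_{a\in RP_q}\left(z-e^{2\pi i t a/q}\right).
\]
Since $RP_q$ is a complete set of representatives for $\left(\mathbb{Z}/q\mathbb{Z}\right)^{\times}$, this is the heart of the matter: the claim becomes a question about the multiset $\{\,t a \bmod q : a\in\left(\mathbb{Z}/q\mathbb{Z}\right)^{\times}\,\}$.

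When $(t,q)=1$, multiplication by $t$ permutes $\left(\mathbb{Z}/q\mathbb{Z}\right)^{\times}$, so the factors $e^{2\pi i t a/q}$ run exactly once over all primitive $q$th roots of unity and the product is $\varPsi_q(z)$. For prime $q$ every $t\in\{1,\dots,q-1\}$ falls here, which is why the original formula was correct in that setting; the genuinely new content is the two cases with $(t,q)\neq1$, which occur only when $q$ is composite. For those I would first strip off the common factor: writing $d=(t,q)$ and $t=d\,t'$, $q=d\,q''$ with $(t',q'')=1$, one has $e^{2\pi i t a/q}=e^{2\pi i t' a/q''}$, so the product only sees $t'a\bmod q''$. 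The main work is then counting, with multiplicity, the image of the reduction map $\left(\mathbb{Z}/q\mathbb{Z}\right)^{\times}\to\left(\mathbb{Z}/q''\mathbb{Z}\right)^{\times}$, $a\mapsto a\bmod q''$. This map is a surjective group homomorphism, and since $(t',q'')=1$ multiplication by $t'$ merely permutes $\left(\mathbb{Z}/q''\mathbb{Z}\right)^{\times}$; hence the multiset $\{t'a\bmod q''\}$ equals $\left(\mathbb{Z}/q''\mathbb{Z}\right)^{\times}$ repeated (fiber-size)-many times, and the product is $\varPsi_{q''}(z)$ raised to that fiber size.

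It then remains to compute the fiber size in each structural case. For $q=q_1 q_2$ with $q_1\mid t$ and $q_2\nmid t$ (so $d=q_1$ and $q''=q_2$), I would use the Chinese Remainder isomorphism $\left(\mathbb{Z}/q\mathbb{Z}\right)^{\times}\cong\left(\mathbb{Z}/q_1\mathbb{Z}\right)^{\times}\times\left(\mathbb{Z}/q_2\mathbb{Z}\right)^{\times}$ to see that reduction mod $q_2$ has fibers of size $\Phi(q_1)=q_1-1$, giving $\left(\varPsi_{q_2}(z)\right)^{q_1-1}$. For $q=q_1^{m}$ with $(t,q)=q_1^{m'}$ (so $q''=q_1^{m-m'}$), I would instead compute the fiber size as the index $\Phi(q_1^{m})/\Phi(q_1^{m-m'})=q_1^{m'}$, yielding $\left(\varPsi_{q_1^{m-m'}}(z)\right)^{q_1^{m'}}$. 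In both instances the permutation by $t'$ does not affect the count, only the labeling of the roots.

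The step I expect to be the genuine obstacle\,---\,and the source of the original error\,---\,is precisely this multiplicity bookkeeping. In the prime case the reduction map is a bijection, so each primitive root appears once; for composite $q$ the map $\left(\mathbb{Z}/q\mathbb{Z}\right)^{\times}\to\left(\mathbb{Z}/q''\mathbb{Z}\right)^{\times}$ has nontrivial kernel, and one must justify both its surjectivity and its exact (constant) fiber size. Establishing that every primitive $q''$th root of unity is produced with the same multiplicity\,---\,uniform across fibers by the homomorphism property\,---\,is what forces the exponents $q_1-1$ and $q_1^{m'}$, and is the only place where the structure of $\left(\mathbb{Z}/q\mathbb{Z}\right)^{\times}$ as a product of prime-power unit groups really enters.
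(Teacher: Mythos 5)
Your proposal is correct, and its skeleton matches the paper's: both arguments reduce the product of characteristic polynomials to $\prod_{a\in RP_{q}}\left(z-e^{2\pi ita/q}\right)$ via the disjoint union $RP_{q}=\pm R\cup\pm S$, strip off the common factor $d=\left(t,q\right)$ so that the angles have denominator $q''=q/d$, and then count with what multiplicity each residue coprime to $q''$ occurs. Where you genuinely diverge is in how that multiplicity count is justified, and your version is cleaner. The paper proceeds by explicit enumeration: for $q=q_{1}q_{2}$ it partitions $RP'_{q}$ into the $q_{1}$ blocks $\left\{ \left(t''-1\right)q_{2}+1,\ldots,t''q_{2}-1\right\}$, each a complete set of nonzero residues mod $q_{2}$, and then needs a separate ad hoc argument (the computation with $hq_{1}=kq_{2}+r$) to show that the subtracted multiples $\left\{ q_{1},2q_{1},\ldots,\left(q_{2}-1\right)q_{1}\right\}$ also form a complete system mod $q_{2}$, leaving $q_{1}-1$ copies; for $q=q_{1}^{m}$ it exhibits the explicit partition $RP'_{q}=\dot{\bigcup}_{t''}\left(RP'_{q_{1}^{m-m'}}+t''q_{1}^{m-m'}\right)$. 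You instead invoke, once and for all, that reduction $\left(\mathbb{Z}/q\mathbb{Z}\right)^{\times}\to\left(\mathbb{Z}/q''\mathbb{Z}\right)^{\times}$ is a surjective group homomorphism, so every fiber has size $\Phi\left(q\right)/\Phi\left(q''\right)$, which evaluates to $q_{1}-1$ and $q_{1}^{m'}$ in the two cases. This buys uniformity and generality: the same one-line fiber count handles an arbitrary composite $q$ with $q''=q/\left(t,q\right)$, which matters here since the corrigendum writes out only $q=q_{1}q_{2}$ and $q=q_{1}^{m}$ and asserts the other composite cases are analogous, whereas the paper's hands-on block decompositions are more self-contained and produce the explicit translated copies of $RP'_{q''}$ that the authors reuse in the later character-sum bookkeeping. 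Two details you should make explicit in a final write-up: surjectivity of the reduction map deserves a sentence (a lift coprime to $q$ exists by CRT, as you note for $q=q_{1}q_{2}$; for $q=q_{1}^{m}$ any $a$ coprime to $q_{1}$ is already coprime to $q_{1}^{m}$, so $a$ lifts itself), and in the two-prime case you should record that $1\leq t\leq q-1$ together with $\left(t,q_{i}\right)\neq1$ forces $\left(t,q_{j}\right)=1$, so that indeed $d=q_{i}$ and $q''=q_{j}$.
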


\begin{proof}
That this is true when $\left(t,q\right)=1$ follows from the work
in Section 3 of Ikeda where it is proved for $q$ prime. The essential
observation is that the set of eiganvalues for $g$ and $\bar{g}$
present a complete list of integers prime to $q,$ and taking a power
$t$ that is prime to $q$ preserves this property for $g^{t}$and
$\bar{g}^{t}$ but possibly permutes the integers prime to $q$ between
$g^{t}$and $\bar{g}^{t}.$

Let $q=q_{1}q_{2}$ with $q_{1}$and $q_{2}$ prime and unequal. Now
assume $1\leq t\leq q-1$ and $\left(t,q_{1}\right)\neq1,$ so $t=t'q_{1}$
where $\left(t',q_{2}\right)=1$ and $1\leq t'\leq q_{2}-1.$ Then
\begin{align*}
g^{t}=g^{t'q_{1}}= & \left(\begin{array}{cccc}
\textrm{Rot}\left(t'q_{1}r_{1}/q_{1}q_{2}\right) & 0 & \cdots & 0\\
0 & \textrm{Rot}\left(t'q_{1}r_{2}/q_{1}q_{2}\right) & \cdots & 0\\
\vdots & \vdots & \ddots & 0\\
0 & 0 & \cdots & \textrm{Rot}\left(t'q_{1}r_{n}/q_{1}q_{2}\right)
\end{array}\right)\\
= & \left(\begin{array}{cccc}
\textrm{Rot}\left(t'r_{1}/q_{2}\right) & 0 & \cdots & 0\\
0 & \textrm{Rot}\left(t'r_{2}/q_{2}\right) & \cdots & 0\\
\vdots & \vdots & \ddots & 0\\
0 & 0 & \cdots & \textrm{Rot}\left(t'r_{n}/q_{2}\right)
\end{array}\right).
\end{align*}
Likewise 
\[
\bar{g}^{t}=\left(\begin{array}{cccc}
\textrm{Rot}\left(t's_{1}/q_{2}\right) & 0 & \cdots & 0\\
0 & \textrm{Rot}\left(t's_{2}/q_{2}\right) & \cdots & 0\\
\vdots & \vdots & \ddots & 0\\
0 & 0 & \cdots & \textrm{Rot}\left(t's_{k}/q_{2}\right)
\end{array}\right).
\]
Since the $r_{j}$and $s_{j}$are relatively prime to $q,$ they are
relatively prime to $q_{2}.$ Since$\pm R\cup\pm S=RP_{q}$ is a complete
set of integers prime to $q$ and contains only one of each, then
with the above assumptions, $\left\{ \pm t'r_{1},\ldots,\pm t'r_{k}\right\} \cup\left\{ \pm t's_{1},\ldots,\pm t's_{n}\right\} $
contains $q_{1}-1$ disjoint, complete sets of integers prime to $q_{2,}.$
Indeed, a complete set of integers prime to $q=q_{1}q_{2}$ can be
written 
\[
RP'_{q}=\left\{ 1,\ldots,q_{2}-1,q_{2}+1,\ldots,2q_{2}-1,\ldots,q_{2}\left(q_{1}-1\right)+1.\ldots,q_{1}q_{2}-1\right\} -\left\{ q_{1},2q_{1},\ldots,\left(q_{2}-1\right)q_{1}\right\} .
\]
Note that for $t''\in\mathbb{N},$ each subset 
\[
\left\{ \left(t''-1\right)q_{2}+1,\ldots,t''q_{2}-1\right\} 
\]
is a clearly a complete set of integers relatively prime to $q_{2}.$
Also, the set that is subtracted is a complete set of integers relatively
prime to $q_{2}.$ To see this, note that the subtracted set contains
$q_{2}-1$ elements. And let $hq_{1}=kq_{2}+r$ and $h'q_{1}=k'q_{2}+r$
with $r,h,h'$ strictly between $0$ and $q_{2}.$ Without loss of
generality, assume $h>h'.$ Then $hq_{1}-kq_{2}=h'q_{1}+k'q_{2}$
and $\left(h-h'\right)q_{1}=\left(k'-k\right)q_{2}.$ Then $0\leq h-h'<q_{2}$
and $q_{1}$and $q_{2}$relatively prime, implies $h-h$' is a multiple
of $q_{2}.$ This in turn implies $h=h'$ and hence $k=k'.$ So that
the elements of the subtracted set are $q_{2}-1$ distinct elements
modulo $q_{2},$ hence represent a complete set of integers modulo
$q_{2},$ as claimed.

In conclusion, the set $\pm R\cup\pm S$ gives $q_{1}-1$ copies of
a complete set of integers relatively prime to $q_{2}.$ Since $t'$
is relatively prime to $q_{2}$, $\left\{ \pm t'r_{1},\ldots,\pm t'r_{n}\right\} \cup\left\{ \pm t's_{1},\ldots,\pm t's_{k}\right\} $
equals $q_{1}-1$ disjoint copies of a complete set of integers relatively
prime to $q_{2}.$

The conclusion now follows for the case that $q_{1}$ and $q_{2}$
are distinct primes.

We now consider the case $q=q_{1}^{m},$ with $q_{1}$ prime and $m$
a natural number. We assume $1\leq t\leq q-1$ and $1\neq\left(t,q_{1}\right)<q^{m},$
so $t=t'q_{1}^{m'}$ where $\left(t',q_{1}\right)=1$ and $1\leq t'\leq q_{1}-1.$
So 
\begin{align*}
g^{t}=g^{t'q_{1}}= & \left(\begin{array}{cccc}
\textrm{Rot}\left(t'q_{1}^{m'}r_{1}/q_{1}^{m}\right) & 0 & \cdots & 0\\
0 & \textrm{Rot}\left(t'q_{1}^{m'}r_{2}/q_{1}^{m}\right) & \cdots & 0\\
\vdots & \vdots & \ddots & 0\\
0 & 0 & \cdots & \textrm{Rot}\left(t'q_{1}^{m'}r_{n}/q_{1}^{m}\right)
\end{array}\right)\\
= & \left(\begin{array}{cccc}
\textrm{Rot}\left(t'r_{1}/q_{1}^{m-m'}\right) & 0 & \cdots & 0\\
0 & \textrm{Rot}\left(t'r_{2}/q_{1}^{m-m'}\right) & \cdots & 0\\
\vdots & \vdots & \ddots & 0\\
0 & 0 & \cdots & \textrm{Rot}\left(t'r_{n}/q_{1}^{m-m'}\right)
\end{array}\right)
\end{align*}
likewise 
\[
\bar{g}^{t}=\left(\begin{array}{cccc}
\textrm{Rot}\left(t's_{1}/q_{1}^{m-m'}\right) & 0 & \cdots & 0\\
0 & \textrm{Rot}\left(t's_{2}/q_{1}^{m-m'}\right) & \cdots & 0\\
\vdots & \vdots & \ddots & 0\\
0 & 0 & \cdots & \textrm{Rot}\left(t's_{k}/q_{1}^{m-m'}\right)
\end{array}\right).
\]
 Since the $r_{j}$and $s_{j}$are relatively prime to $q=q_{1}^{m},$
and $q_{1}$ is prime, they are relatively prime to $q_{1}^{m-m'}.$
Since $\pm R\cup\pm S=RP_{q}$ and $t'$ is relatively prime to $q,$
$\left\{ \pm t'r_{1},\ldots,\pm t'r_{n}\right\} \cup\left\{ \pm t's_{1},\ldots,\pm t's_{k}\right\} $
equals one complete set of integers prime to $q$ but also equals
$q_{1}^{m'}$ disjoint, complete sets of integers prime to $q_{1,}^{m-m'}.$
To see this, $RP'_{q}$ can be written 
\[
RP_{q}'=\left\{ 1,\ldots,q_{1}-1,q_{1}+1,\ldots,2q_{1}-1,2q_{1}+1,\ldots,q_{1}\left(q_{1}^{m-1}-1\right)+1.\ldots,q_{1}\left(q_{1}^{m-1}-1\right)+q_{1}-2,q^{m}-1\right\} 
\]
which can be written as the disjoint union 
\[
\dot{\bigcup}_{t''=0}^{q^{m'}-1}\left(RP'_{q^{m-m''}}+t''q^{m-m''}\right)
\]
 So that $\pm R\cup\pm S$ equals $q_{1}^{m'}$ disjoint copies of
complete sets of integers relatively prime to $q_{1}^{m-m'}.$ Since
$t'$ is relatively prime to $q_{1}$, $\left\{ \pm t'r_{1},\ldots,\pm t'r_{n}\right\} \cup\left\{ \pm t's_{1},\ldots,\pm t's_{k}\right\} $
also equals $q_{1}^{m'}$ disjoint copies of complete sets of integers
relatively prime to $q_{1}^{m-m'}.$

We conclude that 
\[
\det\left(z-g^{t}\right)\det\left(z-\bar{g}^{t}\right)=\left(\varPsi_{q_{1}^{m-m'}}\left(z\right)\right)^{q_{1}^{m'}}=\left(\varPsi_{q_{1}}\left(z^{q_{1}^{m-m'-1}}\right)\right)^{q_{1}^{m'}}
\]
when $q=q_{1}^{m},\textrm{and }1\neq\left(t,q_{1}\right)=q_{1}^{m'}<q_{1}^{m}.$The
conclusion now follows for the case that $q_{1}=q_{2}$
\end{proof}
\begin{cor}
WIth notation as above, for $t\neq q$ 
\[
\begin{cases}
\frac{1}{\det\left(z-g^{t}\right)}=\frac{\det\left(z-\bar{g}^{t}\right)}{\varPsi_{q}\left(z\right)}, & \textrm{if }\left(t,q\right)=1\\
\frac{1}{\det\left(z-g^{t}\right)}=\frac{\det\left(z-\bar{g}^{t}\right)}{\left(\varPsi_{q_{j}}\left(z\right)\right)^{q_{i}-1}}, & \textrm{if }q=q_{1}q_{2}\textrm{ and }\left(t,q_{i}\right)=q_{i},\,i\neq j,q_{i}\neq q_{j}\\
\frac{1}{\det\left(z-g^{t}\right)}=\frac{\det\left(z-\bar{g}^{t}\right)}{\left(\varPsi_{q_{1}^{m-m'}}\left(z\right)\right)^{q_{1}^{m'}}}, & \textrm{if }q=q_{1}^{m},\textrm{ and }1\neq\left(t,q_{1}\right)=q_{1}^{m'}<q_{1}^{m}
\end{cases}.
\]
\end{cor}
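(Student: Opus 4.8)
The plan is to read off each line of the corollary directly from the corresponding identity of the preceding Proposition, by dividing through by the appropriate nonzero polynomials. The one preliminary point that deserves care is the legitimacy of this division. Since $g^{t}$ lies in $O\left(2n\right)$, all of its eigenvalues lie on the unit circle, so $\det\left(z-g^{t}\right)$ is a monic polynomial of degree $2n$ in $z$; in particular it is not the zero element of the field of rational functions $\mathbb{C}\left(z\right)$, and the same is true of each cyclotomic factor $\varPsi_{q}\left(z\right)$, $\varPsi_{q_{j}}\left(z\right)$, $\varPsi_{q_{1}^{m-m'}}\left(z\right)$. Hence all the quotients written in the corollary are well-defined elements of $\mathbb{C}\left(z\right)$, and rearranging a polynomial identity into the stated form is a purely formal manipulation.

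For the first case, where $\left(t,q\right)=1$, I would start from $\det\left(z-g^{t}\right)\det\left(z-\bar{g}^{t}\right)=\varPsi_{q}\left(z\right)$ and divide both sides by the nonzero product $\varPsi_{q}\left(z\right)\det\left(z-g^{t}\right)$; this gives immediately $\tfrac{1}{\det\left(z-g^{t}\right)}=\tfrac{\det\left(z-\bar{g}^{t}\right)}{\varPsi_{q}\left(z\right)}$, which is the first line of the corollary. The second and third cases are handled by the identical division, so the only substantive task is to check that the hypotheses in the corollary name exactly the same sets of $t$ as those in the Proposition. In the distinct-prime case $q=q_{1}q_{2}$, the Proposition's condition $\left(t,q_{i}\right)\neq1$ is equivalent to $q_{i}\mid t$, i.e.\ to $\left(t,q_{i}\right)=q_{i}$, precisely because $q_{i}$ is prime; so the corollary's hypothesis ``$\left(t,q_{i}\right)=q_{i},\ i\neq j$'' describes the same $t$, with $j$ denoting the complementary index, and dividing $\det\left(z-g^{t}\right)\det\left(z-\bar{g}^{t}\right)=\left(\varPsi_{q_{j}}\left(z\right)\right)^{q_{i}-1}$ yields the second line. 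In the prime-power case $q=q_{1}^{m}$ the hypothesis $1\neq\left(t,q_{1}\right)=q_{1}^{m'}<q_{1}^{m}$ is stated verbatim in both places, so the third line follows the same way.

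I would also remark briefly that the restriction $t\neq q$ (equivalently $g^{t}\neq I_{2n}$, since $g$ has order $q$) is what guarantees the Proposition is applicable: every $t$ with $1\le t\le q-1$ either satisfies $\left(t,q\right)=1$ or shares a factor with $q$, and for the two admissible shapes of $q$ (a product of two distinct primes, or a prime power) these are exactly the three cases covered. Thus the Proposition supplies a factorization of $\det\left(z-g^{t}\right)\det\left(z-\bar{g}^{t}\right)$ for every relevant $t$, and the corollary is its reciprocal reformulation.

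There is no genuine obstacle in this argument: the corollary is a formal rearrangement of the Proposition, and the proof is essentially immediate once one is working in $\mathbb{C}\left(z\right)$. The ``hard part,'' insofar as there is one, is purely bookkeeping — verifying that the gcd conditions and the index conventions (which index carries the exponent, and which carries the subscript on $\varPsi$) are transcribed consistently between the two statements so that each quotient in the corollary matches the right case of the Proposition.
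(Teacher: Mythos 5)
Your proposal is correct and matches the paper's (implicit) argument: the paper states this corollary with no separate proof, treating it as the immediate reciprocal reformulation of the preceding Proposition, which is exactly the formal division in $\mathbb{C}\left(z\right)$ you carry out. Your added bookkeeping — that $\left(t,q_{i}\right)\neq1$ is equivalent to $\left(t,q_{i}\right)=q_{i}$ since $q_{i}$ is prime, and that the three cases exhaust all $1\leq t\leq q-1$ for the two admissible shapes of $q$ — is precisely the translation the paper leaves to the reader.
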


\subsection{CASE: $q=q_{1}q_{2}$ with $q_{1},q_{2}$ distinct primes:}
\vspace{-.09 in}
\begin{align*}
F^{p}\left(G:z\right)= & \sum_{t=1}^{q}\frac{\chi^{p}\left(g^{t}\right)}{\det\left(z-g^{t}\right)}\\
= & \sum_{t=1,\left(t,q\right)=1}^{q-1}\frac{\chi^{p}\left(g^{t}\right)}{\det\left(z-g^{t}\right)}+\sum_{t=1,\left(t,q_{1}\right)\neq1}^{q-1}\frac{\chi^{p}\left(g^{t}\right)}{\det\left(z-g^{t}\right)}+\sum_{t=1,\left(t,q_{2}\right)\neq1}^{q-1}\frac{\chi^{p}\left(g^{t}\right)}{\det\left(z-g^{t}\right)}+\frac{\chi^{p}\left(g^{q}\right)}{\det\left(z-g^{q}\right)}\\
= & \varPsi_{q}\left(z\right)^{-1}\sum_{t=1,\left(t,q\right)=1}^{q-1}\chi^{p}\left(g^{t}\right)\det\left(z-\bar{g}^{t}\right)+\varPsi_{q_{2}}\left(z\right)^{-\left(q_{1}-1\right)}\sum_{t=1,\left(t,q_{1}\right)\neq1}^{q-1}\chi^{p}\left(g^{t}\right)\det\left(z-\bar{g}^{t}\right)\\
 & +\varPsi_{q_{1}}\left(z\right)^{-\left(q_{2}-1\right)}\sum_{t=1,\left(t,q_{2}\right)\neq1}^{q-1}\chi^{p}\left(g^{t}\right)\det\left(z-\bar{g}^{t}\right)+\left(\begin{array}{c}
2n\\
p
\end{array}\right)\frac{1}{\left(z-1\right)^{2n}},
\end{align*}
since $g^{q}=I_{2n}.$
Using \ref{eq:DET} and \ref{eq:DET-bar}
and denoting by {*}{*} terms that depend only on $q,q_{1},q_{2},n$
and $p,$
\begin{align*}
F^{p}\left(G:z\right)= & \varPsi_{q}\left(z\right)^{-1}\sum_{t=1,\left(t,q\right)=1}^{q-1}\chi^{p}\left(g^{t}\right)\sum_{a=0}^{2k}\left(-1\right)^{a}\chi^{a}\left(\bar{g}^{t}\right)z^{a}\\
 & +\varPsi_{q_{2}}\left(z\right)^{-\left(q_{1}-1\right)}\sum_{t=1,\left(t,q_{1}\right)\neq1}^{q-1}\chi^{p}\left(g^{t}\right)\sum_{a=0}^{2k}\left(-1\right)^{a}\chi^{a}\left(\bar{g}^{t}\right)z^{a}\\
 & +\varPsi_{q_{1}}\left(z\right)^{-\left(q_{2}-1\right)}\sum_{t=1,\left(t,q_{2}\right)\neq1}^{q-1}\chi^{p}\left(g^{t}\right)\sum_{a=0}^{2k}\left(-1\right)^{a}\chi^{a}\left(\bar{g}^{t}\right)z^{a}+**\\
= & \sum_{a=0}^{2k}\left(-1\right)^{a}z^{a}\left[\varPsi_{q}\left(z\right)^{-1}\sum_{t=1,\left(t,q\right)=1}^{q-1}\chi^{p}\left(g^{t}\right)\chi^{a}\left(\bar{g}^{t}\right)\right.\\
 & +\varPsi_{q_{2}}\left(z\right)^{-\left(q_{1}-1\right)}\sum_{t=1,\left(t,q_{1}\right)\neq1}^{q-1}\chi^{p}\left(g^{t}\right)\chi^{a}\left(\bar{g}^{t}\right)\\
 & \left.+\varPsi_{q_{1}}\left(z\right)^{-\left(q_{2}-1\right)}\sum_{t=1,\left(t,q_{2}\right)\neq1}^{q-1}\chi^{p}\left(g^{t}\right)\chi^{a}\left(\bar{g}^{t}\right)\right]+**
\end{align*}
Plugging this into the formula for $F_{G}^{p}\left(z\right)$
above, and denoting by $*$ terms that depend only on $q,q_{1},q_{2},n$
and $p,$
\begin{align*}
F_{G}^{p}\left(z\right)= & \left|G\right|^{-1}\sum_{b=0}^{p}\left(-1\right)^{b}\left(z^{-b}-z^{b+2}\right)F^{p-b}\left(G:z\right)+\left(-1\right)^{p+1}z^{-p}\\
= & \left|G\right|^{-1}\sum_{b=0}^{p}\left(-1\right)^{b}\left(z^{-b}-z^{b+2}\right)\left[\sum_{a=0}^{2k}\left(-1\right)^{a}z^{a}\left[\varPsi_{q}\left(z\right)^{-1}\sum_{t=1,\left(t,q\right)=1}^{q-1}\chi^{p-b}\left(g^{t}\right)\chi^{a}\left(\bar{g}^{t}\right)\right.\right.\\
 & \left.\left.+\varPsi_{q_{2}}\left(z\right)^{-\left(q_{1}-1\right)}\sum_{t=1,\left(t,q_{1}\right)\neq1}^{q-1}\chi^{p-b}\left(g^{t}\right)\chi^{a}\left(\bar{g}^{t}\right)\right.\right.\\
 & \left.\left.+\varPsi_{q_{1}}\left(z\right)^{-\left(q_{2}-1\right)}\sum_{t=1,\left(t,q_{2}\right)\neq1}^{q-1}\chi^{p-b}\left(g^{t}\right)\chi^{a}\left(\bar{g}^{t}\right)+**\right]\right]+\left(-1\right)^{p+1}z^{-p}\\
= & \left|G\right|^{-1}\sum_{a=0}^{2k}\sum_{b=0}^{p}\left(-1\right)^{b+a}\left(z^{a-b}-z^{a+b+2}\right)\left[\varPsi_{q}\left(z\right)^{-1}\sum_{t=1,\left(t,q\right)=1}^{q-1}\chi^{p-b}\left(g^{t}\right)\chi^{a}\left(\bar{g}^{t}\right)\right.\\
 & \left.+\varPsi_{q_{2}}\left(z\right)^{-\left(q_{1}-1\right)}\sum_{t=1,\left(t,q_{1}\right)\neq1}^{q-1}\chi^{p-b}\left(g^{t}\right)\chi^{a}\left(\bar{g}^{t}\right)\right.\\
 & \left.+\varPsi_{q_{1}}\left(z\right)^{-\left(q_{2}-1\right)}\sum_{t=1,\left(t,q_{2}\right)\neq1}^{q-1}\chi^{p-b}\left(g^{t}\right)\chi^{a}\left(\bar{g}^{t}\right)\right]+*
\end{align*}

So equality of $F_{G}^{p}\left(z\right)$ for two different choices
of $G$ (keeping $q,n$ equal) depends only on equality of the coefficient
of each power of $z$ in the Laurent polynomial(s)
\[
\varPsi_{q}\left(z\right)\varPsi_{q_{1}}\left(z\right)^{\left(q_{2}-1\right)}\varPsi_{q_{2}}\left(z\right)^{\left(q_{1}-1\right)}K_{G}^{p}\left(z\right)
\]
 where 
\begin{align*}
K_{G}^{p}\left(z\right)= & \sum_{a=0}^{2k}\sum_{b=0}^{p}\left(-1\right)^{b+a}\left(z^{a-b}-z^{a+b+2}\right)\left[\varPsi_{q}\left(z\right)^{-1}\sum_{t=1,\left(t,q\right)=1}^{q-1}\chi^{p-b}\left(g^{t}\right)\chi^{a}\left(\bar{g}^{t}\right)\right.\\
 & \left.+\varPsi_{q_{2}}\left(z\right)^{-\left(q_{1}-1\right)}\sum_{t=1,\left(t,q_{1}\right)\neq1}^{q-1}\chi^{p-b}\left(g^{t}\right)\chi^{a}\left(\bar{g}^{t}\right)\right.\\
 & \left.+\varPsi_{q_{1}}\left(z\right)^{-\left(q_{2}-1\right)}\sum_{t=1,\left(t,q_{2}\right)\neq1}^{q-1}\chi^{p-b}\left(g^{t}\right)\chi^{a}\left(\bar{g}^{t}\right)\right]
\end{align*}

So that

\begin{align*}
\varPsi_{q}\left(z\right)\varPsi_{q_{1}}\left(z\right)^{\left(q_{2}-1\right)}\varPsi_{q_{2}}\left(z\right)^{\left(q_{1}-1\right)}K_{G}^{p}\left(z\right)= & \sum_{a=0}^{2k}\sum_{b=0}^{p}\left(-1\right)^{b+a}\left(z^{a-b}-z^{a+b+2}\right)\times\\
 & \times\left[\varPsi_{q_{1}}\left(z\right)^{\left(q_{2}-1\right)}\varPsi_{q_{2}}\left(z\right)^{\left(q_{1}-1\right)}\sum_{t=1,\left(t,q\right)=1}^{q-1}\chi^{p-b}\left(g^{t}\right)\chi^{a}\left(\bar{g}^{t}\right)\right.+\\
 & +\left.\varPsi_{q}\left(z\right)\varPsi_{q_{1}}\left(z\right)^{\left(q_{2}-1\right)}\sum_{t=1,\left(t,q_{1}\right)\neq1}^{q-1}\chi^{p-b}\left(g^{t}\right)\chi^{a}\left(\bar{g}^{t}\right)\right.\\
 & +\left.\varPsi_{q}\left(z\right)\varPsi_{q_{2}}\left(z\right)^{\left(q_{1}-1\right)}\sum_{t=1,\left(t,q_{2}\right)\neq1}^{q-1}\chi^{p-b}\left(g^{t}\right)\chi^{a}\left(\bar{g}^{t}\right)\right]
\end{align*}
Rewriting to facilitate isolating powers of $z,$ we have 
\begin{align*}
 & \varPsi_{q}\left(z\right)\varPsi_{q_{1}}\left(z\right)^{\left(q_{2}-1\right)}\varPsi_{q_{2}}\left(z\right)^{\left(q_{1}-1\right)}K_{G}^{p}\left(z\right)=\\
= & \sum_{c=-p}^{2k}\left(-1\right)^{c}z^{c}\left[\varPsi_{q_{1}}\left(z\right)^{\left(q_{2}-1\right)}\varPsi_{q_{2}}\left(z\right)^{\left(q_{1}-1\right)}\sum_{a=\max\left(0,c\right)}^{\min\left(2k,c+p\right)}\sum_{t=1,\left(t,q\right)=1}^{q-1}\chi^{p-a+c}\left(g^{t}\right)\chi^{a}\left(\bar{g}^{t}\right)\right.\\
 & +\left.\varPsi_{q}\left(z\right)\varPsi_{q_{1}}\left(z\right)^{\left(q_{2}-1\right)}\sum_{a=\max\left(0,c\right)}^{\min\left(2k,c+p\right)}\sum_{t=1,\left(t,q_{1}\right)\neq1}^{q-1}\chi^{p-a+c}\left(g^{t}\right)\chi^{a}\left(\bar{g}^{t}\right)\right.\\
 & +\left.\varPsi_{q}\left(z\right)\varPsi_{q_{2}}\left(z\right)^{\left(q_{1}-1\right)}\sum_{a=\max\left(0,c\right)}^{\min\left(2k,c+p\right)}\sum_{t=1,\left(t,q_{2}\right)\neq1}^{q-1}\chi^{p-a+c}\left(g^{t}\right)\chi^{a}\left(\bar{g}^{t}\right)\right]\\
 & +\sum_{c=2}^{2k+p+2}\left(-1\right)^{c+1}z^{c}\left[\varPsi_{q_{1}}\left(z\right)^{\left(q_{2}-1\right)}\varPsi_{q_{2}}\left(z\right)^{\left(q_{1}-1\right)}\sum_{a=\max\left(0,c-2-p\right)}^{\min\left(2k,c-2\right)}\sum_{t=1,\left(t,q\right)=1}^{q-1}\chi^{p+2+a-c}\left(g^{t}\right)\chi^{a}\left(\bar{g}^{t}\right)\right.\\
 & +\left.\varPsi_{q}\left(z\right)\varPsi_{q_{1}}\left(z\right)^{\left(q_{2}-1\right)}\sum_{a=\max\left(0,c-2-p\right)}^{\min\left(2k,c-2\right)}\sum_{t=1,\left(t,q_{1}\right)\neq1}^{q-1}\chi^{p+2+a-c}\left(g^{t}\right)\chi^{a}\left(\bar{g}^{t}\right)\right.\\
 & +\left.\varPsi_{q}\left(z\right)\varPsi_{q_{2}}\left(z\right)^{\left(q_{1}-1\right)}\sum_{a=\max\left(0,c-2-p\right)}^{\min\left(2k,c-2\right)}\sum_{t=1,\left(t,q_{2}\right)\neq1}^{q-1}\chi^{p+2+a-c}\left(g^{t}\right)\chi^{a}\left(\bar{g}^{t}\right)\right]
\end{align*}
Moreover, 
\vspace{-.2 in}
\begin{align*}
 & \varPsi_{q}\left(z\right)\varPsi_{q_{1}}\left(z\right)^{\left(q_{2}-1\right)}\varPsi_{q_{2}}\left(z\right)^{\left(q_{1}-1\right)}K_{G}^{p}\left(z\right)=\\
 & \varPsi_{q_{1}}\left(z\right)^{\left(q_{2}-1\right)}\varPsi_{q_{2}}\left(z\right)^{\left(q_{1}-1\right)}\left[\sum_{c=-p}^{2k}\left(-1\right)^{c}z^{c}\sum_{a=\max\left(0,c\right)}^{\min\left(2k,c+p\right)}\sum_{t=1,\left(t,q\right)=1}^{q-1}\chi^{p-a+c}\left(g^{t}\right)\chi^{a}\left(\bar{g}^{t}\right)\right.\\
 & \left.+\sum_{c=2}^{2k+p+2}\left(-1\right)^{c+1}z^{c}\sum_{a=\max\left(0,c-2-p\right)}^{\min\left(2k,c-2\right)}\sum_{t=1,\left(t,q\right)=1}^{q-1}\chi^{p+2+a-c}\left(g^{t}\right)\chi^{a}\left(\bar{g}^{t}\right)\right]\\
 & +\varPsi_{q}\left(z\right)\varPsi_{q_{1}}\left(z\right)^{\left(q_{2}-1\right)}\left[\sum_{c=-p}^{2k}\left(-1\right)^{c}z^{c}\sum_{a=\max\left(0,c\right)}^{\min\left(2k,c+p\right)}\sum_{t=1,\left(t,q_{1}\right)\neq1}^{q-1}\chi^{p-a+c}\left(g^{t}\right)\chi^{a}\left(\bar{g}^{t}\right)\right.\\
 & \left.+\sum_{c=2}^{2k+p+2}\left(-1\right)^{c+1}z^{c}\sum_{a=\max\left(0,c-2-p\right)}^{\min\left(2k,c-2\right)}\sum_{t=1,\left(t,q_{1}\right)\neq1}^{q-1}\chi^{p+2+a-c}\left(g^{t}\right)\chi^{a}\left(\bar{g}^{t}\right)\right]\\
 & +\varPsi_{q}\left(z\right)\varPsi_{q_{2}}\left(z\right)^{\left(q_{1}-1\right)}\left[\sum_{c=-p}^{2k}\left(-1\right)^{c}z^{c}\sum_{a=\max\left(0,c\right)}^{\min\left(2k,c+p\right)}\sum_{t=1,\left(t,q_{2}\right)\neq1}^{q-1}\chi^{p-a+c}\left(g^{t}\right)\chi^{a}\left(\bar{g}^{t}\right)\right.\\
 & \left.+\sum_{c=2}^{2k+p+2}\left(-1\right)^{c+1}z^{c}\sum_{a=\max\left(0,c-2-p\right)}^{\min\left(2k,c-2\right)}\sum_{t=1,\left(t,q_{2}\right)\neq1}^{q-1}\chi^{p+2+a-c}\left(g^{t}\right)\chi^{a}\left(\bar{g}^{t}\right)\right]
\end{align*}
\vspace{-2.4mm}
We should note that our computer code compares the terms in square brackets for each $c$.  It is possible that examples of isospectrality occur that we miss.  For example, we need not check equality for all the square brackets for all values of $c$.  Additionally, we could expand the cyclotomic polynomials and then compare coefficients of powers of $z$


\subsection{CASE: $q=q_{1}^{m}$ with $q_{1}$ prime:}

\begin{align*}
F^{p}\left(G:z\right)= & \sum_{t=1}^{q}\frac{\chi^{p}\left(g^{t}\right)}{\det\left(z-g^{t}\right)}\\
= & \sum_{t=1}^{q-1}\frac{\chi^{p}\left(g^{t}\right)}{\det\left(z-g^{t}\right)}+\frac{\chi^{p}\left(g^{q}\right)}{\det\left(z-g^{q}\right)}\\
= & \overset{}{\sum_{t=1,\left(t,q\right)=1}^{q-1}\frac{\chi^{p}\left(g^{t}\right)}{\det\left(z-g^{t}\right)}}+\sum_{t=1,\left(t,q_{1}\right)\neq1}^{q-1}\frac{\chi^{p}\left(g^{t}\right)}{\det\left(z-g^{t}\right)}+\frac{\chi^{p}\left(I_{2n}\right)}{\det\left(z-I_{2n}\right)}\\
= & \varPsi_{q}\left(z\right)^{-1}\sum_{t=1,\left(t,q\right)=1}^{q-1}\chi^{p}\left(g^{t}\right)\det\left(z-\bar{g}^{t}\right)\\
 & +\sum_{m'=1}^{m-1}\varPsi_{q_{1}^{m-m'}}\left(z\right)^{-q_{1}^{m'}}\sum_{t'=1,\left(t',q_{1}\right)=1}^{q_{1}^{m-m'}}\chi^{p}\left(g^{t'q_{1}^{m'}}\right)\det\left(z-\bar{g}^{t'q_{1}^{m'}}\right)+\frac{\chi^{p}\left(I_{2n}\right)}{\det\left(z-I_{2n}\right)}
\end{align*}
\newpage{}

If $m=1$; that is, if $q=q_{1}$ with $q_{1}$ prime:

\begin{align*}
F^{p}\left(G:z\right)= & \varPsi_{q}\left(z\right)^{-1}\sum_{t=1,\left(t,q\right)=1}^{q-1}\chi^{p}\left(g^{t}\right)\det\left(z-\bar{g}^{t}\right)+\frac{\chi^{p}\left(I_{2n}\right)}{\det\left(z-I_{2n}\right)}\\
= & \varPsi_{q}\left(z\right)^{-1}\sum_{t=1}^{q}\chi^{p}\left(g^{t}\right)\det\left(z-\bar{g}^{t}\right)-\varPsi_{q}\left(z\right)^{-1}\chi^{p}\left(g^{q}\right)\det\left(z-\bar{g}^{q}\right)+\frac{\chi^{p}\left(I_{2n}\right)}{\det\left(z-I_{2n}\right)}\\
= & \varPsi_{q}\left(z\right)^{-1}\sum_{t=1}^{q}\chi^{p}\left(g^{t}\right)\sum_{a=0}^{2k}\left(-1\right)^{a}\chi^{a}\left(\bar{g}^{t}\right)z^{a}\\
 & -\varPsi_{q}\left(z\right)^{-1}\chi^{p}\left(I_{2n}\right)\det\left(z-I_{2k}\right)+\left(\begin{array}{c}
2n\\
p
\end{array}\right)\frac{1}{\left(z-1\right)^{2n}}\\
= & \varPsi_{q}\left(z\right)^{-1}\sum_{t=1}^{q}\chi^{p}\left(g^{t}\right)\sum_{a=0}^{2k}\left(-1\right)^{a}\chi^{a}\left(\bar{g}^{t}\right)z^{a}+**
\end{align*}
Plugging this into the above 
\begin{align*}
F_{G}^{p}\left(z\right)= & \left|G\right|^{-1}\sum_{b=0}^{p}\left(-1\right)^{b}\left(z^{-b}-z^{b+2}\right)F^{p-b}\left(G:z\right)+\left(-1\right)^{p+1}z^{-p}\\
= & \left|G\right|^{-1}\sum_{b=0}^{p}\left(-1\right)^{b}\left(z^{-b}-z^{b+2}\right)\left[\sum_{a=0}^{2k}\left(-1\right)^{a}z^{a}\left[\varPsi_{q}\left(z\right)^{-1}\sum_{t=1}^{q}\chi^{p-b}\left(g^{t}\right)\chi^{a}\left(\bar{g}^{t}\right)\right.\right.\\
 & \left.\left.+**\right]\right]+\left(-1\right)^{p+1}z^{-p}\\
= & \varPsi_{q}\left(z\right)^{-1}\left|G\right|^{-1}\sum_{a=0}^{2k}\sum_{b=0}^{p}\left(-1\right)^{b+a}\left(z^{a-b}-z^{a+b+2}\right)\left[\sum_{t=1}^{q}\chi^{p-b}\left(g^{t}\right)\chi^{a}\left(\bar{g}^{t}\right)\right]+*
\end{align*}

where {*}{*} and $*$ depend only on $q,n$ and $p.$

So equality of $F_{G}^{p}\left(z\right)$ for two different choices
of $G$ (keeping $q,n$ equal) depends only on equality of ${\displaystyle K_{G}^{p}\left(z\right)}$
where 
\begin{align*}
K_{G}^{p}\left(z\right)= & \sum_{a=0}^{2k}\sum_{b=0}^{p}\left(-1\right)^{b+a}\left(z^{a-b}-z^{a+b+2}\right)\left[\sum_{t=1}^{q}\chi^{p-b}\left(g^{t}\right)\chi^{a}\left(\bar{g}^{t}\right)\right]
\end{align*}

Rewriting to facilitate isolating powers of $z,$ we have
\begin{align*}
K_{G}^{p}\left(z\right)= & \sum_{c=-p}^{2k}\left(-1\right)^{c}z^{c}\left[\sum_{a=\max\left(0,c\right)}^{\min\left(2k,c+p\right)}\sum_{t=1}^{q}\chi^{p-a+c}\left(g^{t}\right)\chi^{a}\left(\bar{g}^{t}\right)\right]\\
 & +\sum_{c=2}^{2k+p+2}\left(-1\right)^{c+1}z^{c}\left[\sum_{a=\max\left(0,c-2-p\right)}^{\min\left(2k,c-2\right)}\sum_{t=1}^{q}\chi^{p+2+a-c}\left(g^{t}\right)\chi^{a}\left(\bar{g}^{t}\right)\right]
\end{align*}

\newpage If $m=2,$ that is, $q=q_{1}^{2}$ with $q_{1}$ prime: 
\begin{align*}
F^{p}\left(G:z\right)= & \varPsi_{q}\left(z\right)^{-1}\sum_{t=1,\left(t,q\right)=1}^{q-1}\chi^{p}\left(g^{t}\right)\det\left(z-\bar{g}^{t}\right)\\
 & +\sum_{m'=1}^{m-1}\varPsi_{q_{1}^{m-m'}}\left(z\right)^{-q_{1}^{m'}}\sum_{t'=1,\left(t',q_{1}\right)=1}^{q_{1}^{m-m'}}\chi^{p}\left(g^{t'q_{1}^{m'}}\right)\det\left(z-\bar{g}^{t'q_{1}^{m'}}\right)+\frac{\chi^{p}\left(I_{2n}\right)}{\det\left(z-I_{2n}\right)}\\
= & \varPsi_{q_{1}^{2}}\left(z\right)^{-1}\sum_{t=1,\left(t,q_{1}\right)=1}^{q_{1}^{2}-1}\chi^{p}\left(g^{t}\right)\det\left(z-\bar{g}^{t}\right)+\left(\begin{array}{c}
2n\\
p
\end{array}\right)\frac{1}{\left(z-1\right)^{2n}}\\
 & +\varPsi_{q_{1}}\left(z\right)^{-q_{1}}\sum_{t'=1}^{q_{1}-1}\chi^{p}\left(g^{t'q_{1}}\right)\det\left(z-\bar{g}^{t'q_{1}}\right)\\
= & \varPsi_{q_{1}}\left(z^{q_{1}}\right)^{-1}\sum_{t=1,\left(t,q_{1}\right)=1}^{q_{1}^{2}-1}\chi^{p}\left(g^{t}\right)\det\left(z-\bar{g}^{t}\right)+\varPsi_{q_{1}}\left(z\right)^{-q_{1}}\sum_{t'=1}^{q_{1}-1}\chi^{p}\left(g^{t'q_{1}}\right)\det\left(z-\bar{g}^{t'q_{1}}\right)+**
\end{align*}
Plugging this into the above and denoting by $*$ terms that depend
only on $q,q_{1},q_{2},n$ and $p,$
\begin{align*}
F_{G}^{p}\left(z\right)= & \left|G\right|^{-1}\sum_{b=0}^{p}\left(-1\right)^{b}\left(z^{-b}-z^{b+2}\right)F^{p-b}\left(G:z\right)+\left(-1\right)^{p+1}z^{-p}\\
= & \left|G\right|^{-1}\sum_{b=0}^{p}\left(-1\right)^{b}\left(z^{-b}-z^{b+2}\right)\left[\varPsi_{q_{1}}\left(z^{q_{1}}\right)^{-1}\sum_{t=1,\left(t,q_{1}\right)=1}^{q_{1}^{2}-1}\chi^{p-b}\left(g^{t}\right)\det\left(z-\bar{g}^{t}\right)\right.\\
 & \left.+\varPsi_{q_{1}}\left(z\right)^{-q_{1}}\sum_{t'=1}^{q_{1}-1}\chi^{p}\left(g^{t'q_{1}}\right)\det\left(z-\bar{g}^{t'q_{1}}\right)+**\right]+\left(-1\right)^{p+1}z^{-p}\\
= & \left|G\right|^{-1}\sum_{b=0}^{p}\left(-1\right)^{b}\left(z^{-b}-z^{b+2}\right)\left[\sum_{a=0}^{2k}\left(-1\right)^{a}z^{a}\left[\varPsi_{q_{1}}\left(z^{q_{1}}\right)^{-1}\sum_{t=1,\left(t,q\right)=1}^{q_{1}^{2}-1}\chi^{p-b}\left(g^{t}\right)\chi^{a}\left(\bar{g}^{t}\right)\right.\right.\\
 & \left.\left.+\varPsi_{q_{2}}\left(z\right)^{-q_{1}}\sum_{t'=1}^{q_{1}-1}\chi^{p-b}\left(g^{t'q_{1}}\right)\chi^{a}\left(\bar{g}^{t'q_{1}}\right)+**\right]\right]+\left(-1\right)^{p+1}z^{-p}\\
= & \left|G\right|^{-1}\sum_{a=0}^{2k}\sum_{b=0}^{p}\left(-1\right)^{b+a}\left(z^{a-b}-z^{a+b+2}\right)\left[\varPsi_{q_{1}}\left(z^{q_{1}}\right)^{-1}\sum_{t=1,\left(t,q\right)=1}^{q_{1}^{2}-1}\chi^{p-b}\left(g^{t}\right)\chi^{a}\left(\bar{g}^{t}\right)\right.\\
 & \left.+\varPsi_{q_{1}}\left(z\right)^{-q_{1}}\sum_{t'=1}^{q_{1}-1}\chi^{p-b}\left(g^{t'q_{1}}\right)\chi^{a}\left(\bar{g}^{t'q_{1}}\right)\right]+*
\end{align*}

So equality of $F_{G}^{p}\left(z\right)$ for two different choices
of $G$ (keeping $q,n$ equal) depends only on equality of (the coefficient
of each power of $z$ in the Laurent polynomial(s))
\[
\varPsi_{q_{1}}\left(z^{q_{1}}\right)\varPsi_{q_{1}}\left(z\right)^{q_{1}}K_{G}^{p}\left(z\right)
\]
 where 
\begin{align*}
K_{G}^{p}\left(z\right)= & \sum_{a=0}^{2k}\sum_{b=0}^{p}\left(-1\right)^{b+a}\left(z^{a-b}-z^{a+b+2}\right)\left[\varPsi_{q_{1}}\left(z^{q_{1}}\right)^{-1}\sum_{t=1,\left(t,q\right)=1}^{q_{1}^{2}-1}\chi^{p-b}\left(g^{t}\right)\chi^{a}\left(\bar{g}^{t}\right)\right.\\
 & \left.+\varPsi_{q_{1}}\left(z\right)^{-q_{1}}\sum_{t'=1}^{q_{1}-1}\chi^{p-b}\left(g^{t'q_{1}}\right)\chi^{a}\left(\bar{g}^{t'q_{1}}\right)\right]
\end{align*}

So that

\begin{align*}
\varPsi_{q_{1}}\left(z^{q_{1}}\right)\varPsi_{q_{1}}\left(z\right)^{q_{1}}K_{G}^{p}= & \sum_{a=0}^{2k}\sum_{b=0}^{p}\left(-1\right)^{b+a}\left(z^{a-b}-z^{a+b+2}\right)\times\\
 & \times\left[\varPsi_{q_{1}}\left(z\right)^{q_{1}}\sum_{t=1,\left(t,q\right)=1}^{q_{1}^{2}-1}\chi^{p-b}\left(g^{t}\right)\chi^{a}\left(\bar{g}^{t}\right)\right.+\\
 & +\left.\varPsi_{q_{1}}\left(z^{q_{1}}\right)\sum_{t'=1}^{q_{1}-1}\chi^{p-b}\left(g^{t'q_{1}}\right)\chi^{a}\left(\bar{g}^{t'q_{1}}\right)\right]
\end{align*}
Rewriting to facilitate isolating powers of $z,$ we have
\begin{align*}
 & \varPsi_{q_{1}}\left(z^{q_{1}}\right)\varPsi_{q_{1}}\left(z\right)^{q_{1}}K_{G}^{p}\left(z\right)=\\
 & \sum_{c=-p}^{2k}\left(-1\right)^{c}z^{c}\left[\varPsi_{q_{1}}\left(z\right)^{q_{1}}\sum_{a=\max\left(0,c\right)}^{\min\left(2k,c+p\right)}\sum_{t=1,\left(t,q\right)=1}^{q_{1}^{2}-1}\chi^{p-a+c}\left(g^{t}\right)\chi^{a}\left(\bar{g}^{t}\right)\right.\\
 & +\left.\varPsi_{q_{1}}\left(z^{q_{1}}\right)\sum_{a=\max\left(0,c\right)}^{\min\left(2k,c+p\right)}\sum_{t'=1}^{q_{1}-1}\chi^{p-a+c}\left(g^{t'q_{1}}\right)\chi^{a}\left(\bar{g}^{t'q_{1}}\right)\right]\\
 & +\sum_{c=2}^{2k+p+2}\left(-1\right)^{c+1}z^{c}\left[\varPsi_{q_{1}}\left(z\right)^{q_{1}}\sum_{a=\max\left(0,c-2-p\right)}^{\min\left(2k,c-2\right)}\sum_{t=1,\left(t,q\right)=1}^{q_{1}^{2}-1}\chi^{p+2+a-c}\left(g^{t}\right)\chi^{a}\left(\bar{g}^{t}\right)\right.\\
 & +\left.\varPsi_{q_{1}}\left(z^{q_{1}}\right)\sum_{a=\max\left(0,c-2-p\right)}^{\min\left(2k,c-2\right)}\sum_{t'=1}^{q_{1}-1}\chi^{p+2+a-c}\left(g^{t'q_{1}}\right)\chi^{a}\left(\bar{g}^{t'q_{1}}\right)\right]
\end{align*}
Moreover, 

\begin{align*}
 & \varPsi_{q_{1}}\left(z^{q_{1}}\right)\varPsi_{q_{1}}\left(z\right)^{q_{1}}K_{G}^{p}\left(z\right)=\\
 & \varPsi_{q_{1}}\left(z\right)^{q_{1}}\left[\sum_{c=-p}^{2k}\left(-1\right)^{c}z^{c}\sum_{a=\max\left(0,c\right)}^{\min\left(2k,c+p\right)}\sum_{t=1,\left(t,q\right)=1}^{q_{1}^{2}-1}\chi^{p-a+c}\left(g^{t}\right)\chi^{a}\left(\bar{g}^{t}\right)\right.\\
 & \left.+\sum_{c=2}^{2k+p+2}\left(-1\right)^{c+1}z^{c}\sum_{a=\max\left(0,c-2-p\right)}^{\min\left(2k,c-2\right)}\sum_{t=1,\left(t,q\right)=1}^{q_{1}^{2}-1}\chi^{p+2+a-c}\left(g^{t}\right)\chi^{a}\left(\bar{g}^{t}\right)\right]\\
 & +\varPsi_{q_{1}}\left(z^{q_{1}}\right)\left[\sum_{c=-p}^{2k}\left(-1\right)^{c}z^{c}\sum_{a=\max\left(0,c\right)}^{\min\left(2k,c+p\right)}\sum_{t'=1}^{q_{1}-1}\chi^{p-a+c}\left(g^{t'q_{1}}\right)\chi^{a}\left(\bar{g}^{t'q_{1}}\right)\right.\\
 & \left.+\sum_{c=2}^{2k+p+2}\left(-1\right)^{c+1}z^{c}\sum_{a=\max\left(0,c-2-p\right)}^{\min\left(2k,c-2\right)}\sum_{t'=1}^{q_{1}-1}\chi^{p+2+a-c}\left(g^{t'q_{1}}\right)\chi^{a}\left(\bar{g}^{t'q_{1}}\right)\right]
\end{align*}
 For $m=3;$ that is, $q=q_{1}^{3}$ with $q_{1}$prime: 
\begin{align*}
F^{p}\left(G:z\right)= & \varPsi_{q}\left(z\right)^{-1}\sum_{t=1,\left(t,q\right)=1}^{q-1}\chi^{p}\left(g^{t}\right)\det\left(z-\bar{g}^{t}\right)\\
 & +\sum_{m'=1}^{m-1}\varPsi_{q_{1}^{m-m'}}\left(z\right)^{-q_{1}^{m'}}\sum_{t'=1,\left(t',q_{1}\right)=1}^{q_{1}^{m-m'}}\chi^{p}\left(g^{t'q_{1}^{m'}}\right)\det\left(z-\bar{g}^{t'q_{1}^{m'}}\right)+\frac{\chi^{p}\left(I_{2n}\right)}{\det\left(z-I_{2n}\right)}\\
= & \varPsi_{q_{1}^{3}}\left(z\right)^{-1}\sum_{t=1,\left(t,q_{1}\right)=1}^{q_{1}^{3}-1}\chi^{p}\left(g^{t}\right)\det\left(z-\bar{g}^{t}\right)+\left(\begin{array}{c}
2n\\
p
\end{array}\right)\frac{1}{\left(z-1\right)^{2n}}\\
 & +\varPsi_{q_{1}^{2}}\left(z\right)^{-q_{1}}\sum_{t'=1,\left(t',q_{1}\right)=1}^{q_{1}^{2}-1}\chi^{p}\left(g^{t'q_{1}}\right)\det\left(z-\bar{g}^{t'q_{1}}\right)\\
 & +\varPsi_{q_{1}}\left(z\right)^{-q_{1}^{2}}\sum_{t'=1,\left(t',q_{1}\right)=1}^{q_{1}-1}\chi^{p}\left(g^{t'q_{1}^{2}}\right)\det\left(z-\bar{g}^{t'q_{1}^{2}}\right)
\end{align*}
Continuing, and denoting by {*}{*} terms that depend only on $q,q_{1},q_{2},n$
and $p,$
\vspace{-.2in}
\begin{align*}
F^{p}\left(G:z\right)= & \varPsi_{q_{1}}\left(z^{q_{1}^{2}}\right)^{-1}\sum_{t=1,\left(t,q_{1}\right)=1}^{q_{1}^{3}-1}\chi^{p}\left(g^{t}\right)\det\left(z-\bar{g}^{t}\right)\\
 & +\varPsi_{q_{1}}\left(z^{q_{1}}\right)^{-q_{1}}\sum_{t'=1,\left(t',q_{1}\right)=1}^{q_{1}^{2}-1}\chi^{p}\left(g^{t'q_{1}}\right)\det\left(z-\bar{g}^{t'q_{1}}\right)\\
 & +\varPsi_{q_{1}}\left(z\right)^{-q_{1}^{2}}\sum_{t'=1}^{q_{1}-1}\chi^{p}\left(g^{t'q_{1}^{2}}\right)\det\left(z-\bar{g}^{t'q_{1}^{2}}\right)+**
\end{align*}
\vspace{-.2in}
Plugging this into the above where $*$ is terms that depend
only on $q,q_{1},q_{2},n$  and $p,$
\begin{align*}
F_{G}^{p}\left(z\right)= & \left|G\right|^{-1}\sum_{b=0}^{p}\left(-1\right)^{b}\left(z^{-b}-z^{b+2}\right)F^{p-b}\left(G:z\right)+\left(-1\right)^{p+1}z^{-p}\\
= & \left|G\right|^{-1}\sum_{b=0}^{p}\left(-1\right)^{b}\left(z^{-b}-z^{b+2}\right)\left[\varPsi_{q_{1}}\left(z^{q_{1}^{2}}\right)^{-1}\sum_{t=1,\left(t,q_{1}\right)=1}^{q_{1}^{3}-1}\chi^{p-b}\left(g^{t}\right)\det\left(z-\bar{g}^{t}\right)\right.\\
 & \left.+\varPsi_{q_{1}}\left(z^{q_{1}}\right)^{-q_{1}}\sum_{t'=1,\left(t',q_{1}\right)=1}^{q_{1}^{2}-1}\chi^{p-b}\left(g^{t'q_{1}}\right)\det\left(z-\bar{g}^{t'q_{1}}\right)\right.\\
 & \left.+\varPsi_{q_{1}}\left(z\right)^{-q_{1}^{2}}\sum_{t'=1}^{q_{1}-1}\chi^{p-b}\left(g^{t'q_{1}^{2}}\right)\det\left(z-\bar{g}^{t'q_{1}^{2}}\right)+**\right]+\left(-1\right)^{p+1}z^{-p}\\
= & \left|G\right|^{-1}\sum_{b=0}^{p}\left(-1\right)^{b}\left(z^{-b}-z^{b+2}\right)\left[\sum_{a=0}^{2k}\left(-1\right)^{a}z^{a}\left[\varPsi_{q_{1}}\left(z^{q_{1}^{2}}\right)^{-1}\sum_{t=1,\left(t,q_{1}\right)=1}^{q_{1}^{3}-1}\chi^{p-b}\left(g^{t}\right)\chi^{a}\left(\bar{g}^{t}\right)\right.\right.\\
 & \left.\left.+\varPsi_{q_{1}}\left(z^{q_{1}}\right)^{-q_{1}}\sum_{t'=1,\left(t',q_{1}\right)=1}^{q_{1}^{2}-1}\chi^{p-b}\left(g^{t'q_{1}}\right)\chi^{a}\left(\bar{g}^{t'q_{1}}\right)\right.\right.\\
 & \left.\left.+\varPsi_{q_{1}}\left(z\right)^{-q_{1}^{2}}\sum_{t'=1}^{q_{1}-1}\chi^{p-b}\left(g^{t'q_{1}^{2}}\right)\chi^{a}\left(\bar{g}^{t'q_{1}^{2}}\right)+**\right]\right]+\left(-1\right)^{p+1}z^{-p}\\
= & \left|G\right|^{-1}\sum_{a=0}^{2k}\sum_{b=0}^{p}\left(-1\right)^{b+a}\left(z^{a-b}-z^{a+b+2}\right)\left[\varPsi_{q_{1}}\left(z^{q_{1}^{2}}\right)^{-1}\sum_{t=1,\left(t,q_{1}\right)=1}^{q_{1}^{3}-1}\chi^{p-b}\left(g^{t}\right)\chi^{a}\left(\bar{g}^{t}\right)\right.\\
 & \left.+\varPsi_{q_{1}}\left(z^{q_{1}}\right)^{-q_{1}}\sum_{t'=1,\left(t',q_{1}\right)=1}^{q_{1}^{2}-1}\chi^{p-b}\left(g^{t'q_{1}}\right)\chi^{a}\left(\bar{g}^{t'q_{1}}\right)\right.\\
 & \left.+\varPsi_{q_{1}}\left(z\right)^{-q_{1}^{2}}\sum_{t'=1}^{q_{1}-1}\chi^{p-b}\left(g^{t'q_{1}^{2}}\right)\chi^{a}\left(\bar{g}^{t'q_{1}^{2}}\right)\right]+*
\end{align*}

So equality of $F_{G}^{p}\left(z\right)$ for two different choices
of $G$ (keeping $q,n$ equal) depends only on equality of (the coefficient
of each power of $z$ in the Laurent polynomial(s))
\[
\varPsi_{q_{1}}\left(z^{q_{1}^{2}}\right)\varPsi_{q_{1}}\left(z^{q_{1}}\right)^{q_{1}}\varPsi_{q_{1}}\left(z\right)^{q_{1}^{2}}K_{G}^{p}\left(z\right)
\]
 where 
\begin{align*}
K_{G}^{p}\left(z\right)= & \sum_{a=0}^{2k}\sum_{b=0}^{p}\left(-1\right)^{b+a}\left(z^{a-b}-z^{a+b+2}\right)\left[\varPsi_{q_{1}}\left(z^{q_{1}^{2}}\right)^{-1}\sum_{t=1,\left(t,q_{1}\right)=1}^{q_{1}^{3}-1}\chi^{p-b}\left(g^{t}\right)\chi^{a}\left(\bar{g}^{t}\right)\right.\\
 & \left.+\varPsi_{q_{1}}\left(z^{q_{1}}\right)^{-q_{1}}\sum_{t'=1,\left(t',q_{1}\right)=1}^{q_{1}^{2}-1}\chi^{p-b}\left(g^{t'q_{1}}\right)\chi^{a}\left(\bar{g}^{t'q_{1}}\right)\right.\\
 & \left.+\varPsi_{q_{1}}\left(z\right)^{-q_{1}^{2}}\sum_{t'=1}^{q_{1}-1}\chi^{p-b}\left(g^{t'q_{1}^{2}}\right)\chi^{a}\left(\bar{g}^{t'q_{1}^{2}}\right)\right]
\end{align*}

So that

\begin{align*}
 & \varPsi_{q_{1}}\left(z^{q_{1}^{2}}\right)\varPsi_{q_{1}}\left(z^{q_{1}}\right)^{q_{1}}\varPsi_{q_{1}}\left(z\right)^{q_{1}^{2}}K_{G}^{p}\left(z\right)=\\
 & \sum_{a=0}^{2k}\sum_{b=0}^{p}\left(-1\right)^{b+a}\left(z^{a-b}-z^{a+b+2}\right)\times\\
 & \times\left[\varPsi_{q_{1}}\left(z^{q_{1}}\right)^{q_{1}}\varPsi_{q_{1}}\left(z\right)^{q_{1}^{2}}\sum_{t=1,\left(t,q_{1}\right)=1}^{q_{1}^{3}-1}\chi^{p-b}\left(g^{t}\right)\chi^{a}\left(\bar{g}^{t}\right)\right.+\\
 & \left.+\varPsi_{q_{1}}\left(z^{q_{1}^{2}}\right)\varPsi_{q_{1}}\left(z\right)^{q_{1}^{2}}\sum_{t'=1,\left(t',q_{1}\right)=1}^{q_{1}^{2}-1}\chi^{p-b}\left(g^{t'q_{1}}\right)\chi^{a}\left(\bar{g}^{t'q_{1}}\right)\right.\\
 & +\left.+\varPsi_{q_{1}}\left(z^{q_{1}^{2}}\right)\varPsi_{q_{1}}\left(z^{q_{1}}\right)^{q_{1}}\sum_{t'=1}^{q_{1}-1}\chi^{p-b}\left(g^{t'q_{1}^{2}}\right)\chi^{a}\left(\bar{g}^{t'q_{1}^{2}}\right)\right]
\end{align*}
Rewriting to facilitate isolating powers of $z,$ we have 
\vspace{-2mm}
\begin{align*}
 & \varPsi_{q_{1}}\left(z^{q_{1}^{2}}\right)\varPsi_{q_{1}}\left(z^{q_{1}}\right)^{q_{1}}\varPsi_{q_{1}}\left(z\right)^{q_{1}^{2}}K_{G}^{p}\left(z\right)=\\
 & \sum_{c=-p}^{2k}\left(-1\right)^{c}z^{c}\left[\varPsi_{q_{1}}\left(z^{q_{1}}\right)^{q_{1}}\varPsi_{q_{1}}\left(z\right)^{q_{1}^{2}}\sum_{a=\max\left(0,c\right)}^{\min\left(2k,c+p\right)}\sum_{t=1,\left(t,q_{1}\right)=1}^{q_{1}^{3}-1}\chi^{p-a+c}\left(g^{t}\right)\chi^{a}\left(\bar{g}^{t}\right)\right.\\
 & \left.+\varPsi_{q_{1}}\left(z^{q_{1}^{2}}\right)\varPsi_{q_{1}}\left(z\right)^{q_{1}^{2}}\sum_{a=\max\left(0,c\right)}^{\min\left(2k,c+p\right)}\sum_{t'=1,\left(t',q_{1}\right)=1}^{q_{1}^{2}-1}\chi^{p-a+c}\left(g^{t}\right)\chi^{a}\left(\bar{g}^{t}\right)\right.\\
 & +\left.\varPsi_{q_{1}}\left(z^{q_{1}^{2}}\right)\varPsi_{q_{1}}\left(z^{q_{1}}\right)^{q_{1}}\sum_{a=\max\left(0,c\right)}^{\min\left(2k,c+p\right)}\sum_{t'=1}^{q_{1}-1}\chi^{p-a+c}\left(g^{t'q_{1}}\right)\chi^{a}\left(\bar{g}^{t'q_{1}}\right)\right]\\
 & +\sum_{c=2}^{2k+p+2}\left(-1\right)^{c+1}z^{c}\left[\varPsi_{q_{1}}\left(z^{q_{1}}\right)^{q_{1}}\varPsi_{q_{1}}\left(z\right)^{q_{1}^{2}}\sum_{a=\max\left(0,c-2-p\right)}^{\min\left(2k,c-2\right)}\sum_{t=1,\left(t,q_{1}\right)=1}^{q_{1}^{3}-1}\chi^{p+2+a-c}\left(g^{t}\right)\chi^{a}\left(\bar{g}^{t}\right)\right.\\
 & \left.+\varPsi_{q_{1}}\left(z^{q_{1}^{2}}\right)\varPsi_{q_{1}}\left(z\right)^{q_{1}^{2}}\sum_{a=\max\left(0,c-2-p\right)}^{\min\left(2k,c-2\right)}\sum_{t'=1,\left(t',q_{1}\right)=1}^{q_{1}^{2}-1}\chi^{p+2+a-c}\left(g^{t'q_{1}}\right)\chi^{a}\left(\bar{g}^{t'q_{1}}\right)\right.\\
 & +\left.\varPsi_{q_{1}}\left(z^{q_{1}^{2}}\right)\varPsi_{q_{1}}\left(z^{q_{1}}\right)^{q_{1}}\sum_{a=\max\left(0,c-2-p\right)}^{\min\left(2k,c-2\right)}\sum_{t'=1}^{q_{1}-1}\chi^{p+2+a-c}\left(g^{t'q_{1}}\right)\chi^{a}\left(\bar{g}^{t'q_{1}}\right)\right]
\end{align*}

Moreover, 
\begin{align*}
 & \varPsi_{q_{1}}\left(z^{q_{1}^{2}}\right)\varPsi_{q_{1}}\left(z^{q_{1}}\right)^{q_{1}}\varPsi_{q_{1}}\left(z\right)^{q_{1}^{2}}K_{G}^{p}\left(z\right)=\\
 & \varPsi_{q_{1}}\left(z^{q_{1}}\right)^{q_{1}}\varPsi_{q_{1}}\left(z\right)^{q_{1}^{2}}\left[\sum_{c=-p}^{2k}\left(-1\right)^{c}z^{c}\sum_{a=\max\left(0,c\right)}^{\min\left(2k,c+p\right)}\sum_{t=1,\left(t,q_{1}\right)=1}^{q_{1}^{3}-1}\chi^{p-a+c}\left(g^{t}\right)\chi^{a}\left(\bar{g}^{t}\right)\right.\\
 & \left.+\sum_{c=2}^{2k+p+2}\left(-1\right)^{c+1}z^{c}\sum_{a=\max\left(0,c-2-p\right)}^{\min\left(2k,c-2\right)}\sum_{t=1,\left(t,q_{1}\right)=1}^{q_{1}^{3}-1}\chi^{p+2+a-c}\left(g^{t}\right)\chi^{a}\left(\bar{g}^{t}\right)\right]\\
 & +\varPsi_{q_{1}}\left(z^{q_{1}^{2}}\right)\varPsi_{q_{1}}\left(z\right)^{q_{1}^{2}}\left[\sum_{c=-p}^{2k}\left(-1\right)^{c}z^{c}\sum_{a=\max\left(0,c\right)}^{\min\left(2k,c+p\right)}\sum_{t'=1,\left(t',q_{1}\right)=1}^{q_{1}^{2}-1}\chi^{p-a+c}\left(g^{t'q_{1}}\right)\chi^{a}\left(\bar{g}^{t'q_{1}}\right)\right.\\
 & \left.+\sum_{c=2}^{2k+p+2}\left(-1\right)^{c+1}z^{c}\sum_{a=\max\left(0,c-2-p\right)}^{\min\left(2k,c-2\right)}\sum_{t'=1,\left(t',q_{1}\right)=1}^{q_{1}^{2}-1}\chi^{p+2+a-c}\left(g^{t'q_{1}}\right)\chi^{a}\left(\bar{g}^{t'q_{1}}\right)\right]\\
 & +\varPsi_{q_{1}}\left(z^{q_{1}^{2}}\right)\varPsi_{q_{1}}\left(z^{q_{1}}\right)^{q_{1}}\left[\sum_{c=-p}^{2k}\left(-1\right)^{c}z^{c}\sum_{a=\max\left(0,c\right)}^{\min\left(2k,c+p\right)}\sum_{t'=1}^{q_{1}-1}\chi^{p-a+c}\left(g^{t'q_{1}}\right)\chi^{a}\left(\bar{g}^{t'q_{1}}\right)\right.\\
 & \left.+\sum_{c=2}^{2k+p+2}\left(-1\right)^{c+1}z^{c}\sum_{a=\max\left(0,c-2-p\right)}^{\min\left(2k,c-2\right)}\sum_{t'=1}^{q_{1}-1}\chi^{p+2+a-c}\left(g^{t'q_{1}}\right)\chi^{a}\left(\bar{g}^{t'q_{1}}\right)\right]
\end{align*}
We have shown:
\begin{thm}
Equality of closed or coclosed $p$ forms for different choices of
$G,$ keeping $n,p,q$ constant, depends (iff) on equality of the
polynomial $\tilde{K}_{G}^{p}\left(z\right),$ where $\tilde{K}_{G}^{p}\left(z\right)$
is defined as:
\begin{itemize}
\item If $q=q_{1}q_{2}$ with $q_{1},q_{2}$ distinct odd primes: Letting
\begin{align*}
\tilde{K}_{G}^{p}\left(z\right)= & \varPsi_{q_{1}}\left(z\right)^{\left(q_{2}-1\right)}\varPsi_{q_{2}}\left(z\right)^{\left(q_{1}-1\right)}\left[\sum_{c=-p}^{2k}\left(-1\right)^{c}z^{c}\sum_{a=\max\left(0,c\right)}^{\min\left(2k,c+p\right)}\sum_{t=1,\left(t,q\right)=1}^{q-1}\chi^{p-a+c}\left(g^{t}\right)\chi^{a}\left(\bar{g}^{t}\right)\right.\\
 & \left.+\sum_{c=2}^{2k+p+2}\left(-1\right)^{c+1}z^{c}\sum_{a=\max\left(0,c-2-p\right)}^{\min\left(2k,c-2\right)}\sum_{t=1,\left(t,q\right)=1}^{q-1}\chi^{p+2+a-c}\left(g^{t}\right)\chi^{a}\left(\bar{g}^{t}\right)\right]\\
 & +\varPsi_{q}\left(z\right)\varPsi_{q_{1}}\left(z\right)^{\left(q_{2}-1\right)}\left[\sum_{c=-p}^{2k}\left(-1\right)^{c}z^{c}\sum_{a=\max\left(0,c\right)}^{\min\left(2k,c+p\right)}\sum_{t=1,\left(t,q_{1}\right)\neq1}^{q-1}\chi^{p-a+c}\left(g^{t}\right)\chi^{a}\left(\bar{g}^{t}\right)\right.\\
 & \left.+\sum_{c=2}^{2k+p+2}\left(-1\right)^{c+1}z^{c}\sum_{a=\max\left(0,c-2-p\right)}^{\min\left(2k,c-2\right)}\sum_{t=1,\left(t,q_{1}\right)\neq1}^{q-1}\chi^{p+2+a-c}\left(g^{t}\right)\chi^{a}\left(\bar{g}^{t}\right)\right]\\
 & +\varPsi_{q}\left(z\right)\varPsi_{q_{2}}\left(z\right)^{\left(q_{1}-1\right)}\left[\sum_{c=-p}^{2k}\left(-1\right)^{c}z^{c}\sum_{a=\max\left(0,c\right)}^{\min\left(2k,c+p\right)}\sum_{t=1,\left(t,q_{2}\right)\neq1}^{q-1}\chi^{p-a+c}\left(g^{t}\right)\chi^{a}\left(\bar{g}^{t}\right)\right.\\
 & \left.+\sum_{c=2}^{2k+p+2}\left(-1\right)^{c+1}z^{c}\sum_{a=\max\left(0,c-2-p\right)}^{\min\left(2k,c-2\right)}\sum_{t=1,\left(t,q_{2}\right)\neq1}^{q-1}\chi^{p+2+a-c}\left(g^{t}\right)\chi^{a}\left(\bar{g}^{t}\right)\right]
\end{align*}
\item If $q=q_{1}$with $q_{1}$ prime:
\begin{align*}
\tilde{K}_{G}^{p}\left(z\right)= & \sum_{c=-p}^{2k}\left(-1\right)^{c}z^{c}\left[\sum_{a=\max\left(0,c\right)}^{\min\left(2k,c+p\right)}\sum_{t=1}^{q}\chi^{p-a+c}\left(g^{t}\right)\chi^{a}\left(\bar{g}^{t}\right)\right]\\
 & +\sum_{c=2}^{2k+p+2}\left(-1\right)^{c+1}z^{c}\left[\sum_{a=\max\left(0,c-2-p\right)}^{\min\left(2k,c-2\right)}\sum_{t=1}^{q}\chi^{p+2+a-c}\left(g^{t}\right)\chi^{a}\left(\bar{g}^{t}\right)\right]
\end{align*}
\item If $q=q_{1}^{2}$with $q_{1}$ prime:
\begin{align*}
\tilde{K}_{G}^{p}\left(z\right)= & \varPsi_{q_{1}}\left(z\right)^{q_{1}}\left[\sum_{c=-p}^{2k}\left(-1\right)^{c}z^{c}\sum_{a=\max\left(0,c\right)}^{\min\left(2k,c+p\right)}\sum_{t=1,\left(t,q\right)=1}^{q_{1}^{2}-1}\chi^{p-a+c}\left(g^{t}\right)\chi^{a}\left(\bar{g}^{t}\right)\right.\\
 & \left.+\sum_{c=2}^{2k+p+2}\left(-1\right)^{c+1}z^{c}\sum_{a=\max\left(0,c-2-p\right)}^{\min\left(2k,c-2\right)}\sum_{t=1,\left(t,q\right)=1}^{q_{1}^{2}-1}\chi^{p+2+a-c}\left(g^{t}\right)\chi^{a}\left(\bar{g}^{t}\right)\right]\\
 & +\varPsi_{q_{1}}\left(z^{q_{1}}\right)\left[\sum_{c=-p}^{2k}\left(-1\right)^{c}z^{c}\sum_{a=\max\left(0,c\right)}^{\min\left(2k,c+p\right)}\sum_{t'=1}^{q_{1}-1}\chi^{p-a+c}\left(g^{t'q_{1}}\right)\chi^{a}\left(\bar{g}^{t'q_{1}}\right)\right.\\
 & \left.+\sum_{c=2}^{2k+p+2}\left(-1\right)^{c+1}z^{c}\sum_{a=\max\left(0,c-2-p\right)}^{\min\left(2k,c-2\right)}\sum_{t'=1}^{q_{1}-1}\chi^{p+2+a-c}\left(g^{t'q_{1}}\right)\chi^{a}\left(\bar{g}^{t'q_{1}}\right)\right]
\end{align*}
\item If $q=q_{1}^{3}$ with $q_{1}$ prime: 
\begin{align*}
\tilde{K}_{G}^{p}\left(z\right)= & \varPsi_{q_{1}}\left(z^{q_{1}}\right)^{q_{1}}\varPsi_{q_{1}}\left(z\right)^{q_{1}^{2}}\left[\sum_{c=-p}^{2k}\left(-1\right)^{c}z^{c}\sum_{a=\max\left(0,c\right)}^{\min\left(2k,c+p\right)}\sum_{t=1,\left(t,q_{1}\right)=1}^{q_{1}^{3}-1}\chi^{p-a+c}\left(g^{t}\right)\chi^{a}\left(\bar{g}^{t}\right)\right.\\
 & \left.+\sum_{c=2}^{2k+p+2}\left(-1\right)^{c+1}z^{c}\sum_{a=\max\left(0,c-2-p\right)}^{\min\left(2k,c-2\right)}\sum_{t=1,\left(t,q_{1}\right)=1}^{q_{1}^{3}-1}\chi^{p+2+a-c}\left(g^{t}\right)\chi^{a}\left(\bar{g}^{t}\right)\right]\\
 & +\varPsi_{q_{1}}\left(z^{q_{1}^{2}}\right)\varPsi_{q_{1}}\left(z\right)^{q_{1}^{2}}\left[\sum_{c=-p}^{2k}\left(-1\right)^{c}z^{c}\sum_{a=\max\left(0,c\right)}^{\min\left(2k,c+p\right)}\sum_{t'=1,\left(t',q_{1}\right)=1}^{q_{1}^{2}-1}\chi^{p-a+c}\left(g^{t'q_{1}}\right)\chi^{a}\left(\bar{g}^{t'q_{1}}\right)\right.\\
 & \left.+\sum_{c=2}^{2k+p+2}\left(-1\right)^{c+1}z^{c}\sum_{a=\max\left(0,c-2-p\right)}^{\min\left(2k,c-2\right)}\sum_{t'=1,\left(t',q_{1}\right)=1}^{q_{1}^{2}-1}\chi^{p+2+a-c}\left(g^{t'q_{1}}\right)\chi^{a}\left(\bar{g}^{t'q_{1}}\right)\right]\\
 & +\varPsi_{q_{1}}\left(z^{q_{1}^{2}}\right)\varPsi_{q_{1}}\left(z^{q_{1}}\right)^{q_{1}}\left[\sum_{c=-p}^{2k}\left(-1\right)^{c}z^{c}\sum_{a=\max\left(0,c\right)}^{\min\left(2k,c+p\right)}\sum_{t'=1}^{q_{1}-1}\chi^{p-a+c}\left(g^{t'q_{1}}\right)\chi^{a}\left(\bar{g}^{t'q_{1}}\right)\right.\\
 & \left.+\sum_{c=2}^{2k+p+2}\left(-1\right)^{c+1}z^{c}\sum_{a=\max\left(0,c-2-p\right)}^{\min\left(2k,c-2\right)}\sum_{t'=1}^{q_{1}-1}\chi^{p+2+a-c}\left(g^{t'q_{1}}\right)\chi^{a}\left(\bar{g}^{t'q_{1}}\right)\right]
\end{align*}
\newpage{}
\end{itemize}
\end{thm}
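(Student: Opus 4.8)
The statement is the summary of the case-by-case computations carried out above, so the plan is to extract the single uniform mechanism common to all four cases and then point to where the composite cases demand extra care. By Ikeda's spectral criterion (the first Proposition above), the spectrum on coclosed $p$-forms (and, dually, closed $p$-forms) of the lens space determined by $G$ is governed by the generating function $F_G^p(z)$; hence, with $n,p,q$ held fixed, it suffices to prove that $F_G^p(z)=F_{G'}^p(z)$ if and only if $\tilde{K}_G^p(z)=\tilde{K}_{G'}^p(z)$. First I would recall the displayed expansion $F_G^p(z)=|G|^{-1}\sum_{b=0}^{p}(-1)^b(z^{-b}-z^{b+2})F^{p-b}(G:z)+(-1)^{p+1}z^{-p}$ and observe that the trailing summand $(-1)^{p+1}z^{-p}$ is a $G$-independent additive term and that $|G|^{-1}=q^{-1}$ is a common scalar; thus only the dependence of the $F^{p-b}(G:z)$ on the choice of $G$ can matter.

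Next I would analyze $F^p(G:z)=\sum_{t=1}^{q}\chi^p(g^t)/\det(z-g^t)$ by partitioning the index set $\{1,\dots,q\}$ according to $d=(t,q)$. The term $t=q$ gives $g^q=I_{2n}$ and contributes $\binom{2n}{p}(z-1)^{-2n}$, which depends only on $n$ and $p$; I would absorb it, together with any correction terms produced when a coprime sum is extended to run over all $t$, into the universal symbol $**$. On each remaining class the Corollary replaces $1/\det(z-g^t)$ by $\det(z-\bar{g}^t)$ divided by the appropriate cyclotomic product supplied by the determinant-product Proposition: $\varPsi_q(z)$ when $(t,q)=1$; $\varPsi_{q_j}(z)^{q_i-1}$ when $q=q_1q_2$ and $q_i\mid t$; and $\varPsi_{q_1^{m-m'}}(z)^{q_1^{m'}}=\varPsi_{q_1}(z^{q_1^{m-m'-1}})^{q_1^{m'}}$ when $q=q_1^m$ and $(t,q_1)=q_1^{m'}$. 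Expanding $\det(z-\bar{g}^t)=\sum_{a=0}^{2k}(-1)^a\chi^a(\bar{g}^t)z^a$ by \ref{eq:DET-bar} and substituting into the $b$-expansion of $F_G^p$ then gathers everything into the double sum $\sum_{a=0}^{2k}\sum_{b=0}^{p}(-1)^{a+b}(z^{a-b}-z^{a+b+2})[\cdots]$ carrying cyclotomic denominators, plus universal terms $*$ that depend only on $q,q_1,q_2,n,p$.

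The decisive point is that the cyclotomic factors appearing in the denominators are pairwise coprime: for $q=q_1q_2$ the polynomials $\varPsi_{q_1},\varPsi_{q_2},\varPsi_{q_1q_2}$ are distinct cyclotomic polynomials, and for $q=q_1^m$ the factors $\varPsi_{q_1}(z^{q_1^{m-1-m'}})=\varPsi_{q_1^{m-m'}}(z)$ are distinct cyclotomic polynomials as $m'$ varies. Consequently the common denominator is exactly the $G$-independent product displayed in each case, and clearing it converts $K_G^p(z)$ into the genuine Laurent polynomial $\tilde{K}_G^p(z)$. Since both this common denominator and the universal terms depend only on $q,n,p$, they cancel in the difference $F_G^p-F_{G'}^p$, so $F_G^p=F_{G'}^p$ is equivalent to the coefficientwise equality $\tilde{K}_G^p=\tilde{K}_{G'}^p$. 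A purely cosmetic reindexing then isolates the powers of $z$: in the first monomial set $c=a-b$, whence $p-b=p-a+c$ and the limits become $-p\le c\le 2k$, $\max(0,c)\le a\le\min(2k,c+p)$; in the second set $c=a+b+2$, whence $p-b=p+2+a-c$ and the limits become $2\le c\le 2k+p+2$, $\max(0,c-2-p)\le a\le\min(2k,c-2)$. This reproduces precisely the displayed $\tilde{K}_G^p(z)$ in each of the four cases.

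The main obstacle is the prime-power case, and it is exactly the point of the original error. There the non-coprime indices do \emph{not} form a single class but must be stratified by the exact valuation $m'$ with $(t,q_1)=q_1^{m'}$; after writing $t=t'q_1^{m'}$ with $(t',q_1)=1$ and reducing the rotation angles from modulus $q_1^m$ to $q_1^{m-m'}$ as in the determinant-product Proposition, each stratum carries its own denominator $\varPsi_{q_1}(z^{q_1^{m-m'-1}})^{q_1^{m'}}$ and its own character sum $\sum_{t'}\chi^{p-b}(g^{t'q_1^{m'}})\chi^a(\bar{g}^{t'q_1^{m'}})$, with $1\le t'\le q_1^{m-m'}$. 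Keeping these strata, their distinct denominators, and the summation limits correctly aligned while clearing the simultaneous denominator $\prod_{m'=0}^{m-1}\varPsi_{q_1}(z^{q_1^{m-1-m'}})^{q_1^{m'}}$ is the delicate bookkeeping that separates the composite case from the prime case of \cite{GM}.
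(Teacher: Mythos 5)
Your proposal is correct and follows essentially the same route as the paper: reduction via Ikeda's criterion, stratification of $\sum_{t=1}^{q}$ by the value of $(t,q)$, substitution of $1/\det\left(z-g^{t}\right)$ by $\det\left(z-\bar{g}^{t}\right)$ over the appropriate cyclotomic denominators from the Corollary, expansion of $\det\left(z-\bar{g}^{t}\right)$ by \ref{eq:DET-bar}, absorption of the $t=q$ and universal terms into $*$ and $**$, clearing of the $G$-independent denominator, and the reindexings $c=a-b$ and $c=a+b+2$ with exactly the limits the paper uses. Your added remark that the cyclotomic factors are pairwise coprime is a harmless embellishment (the argument only needs the cleared multiplier to be $G$-independent and nonzero, so that multiplication by it is injective on coefficients), and your closing paragraph correctly identifies the valuation-stratified bookkeeping in the prime-power case as the precise point where the original composite-$q$ computation went wrong.
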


We now rewrite the previous Theorem in terms of subset sums.
\begin{rem}
We denote the various cases as follows.
\begin{itemize}
\item CASE $q=q_{1}q_{2}:$ We need to evaluate $\tilde{C}_{G}^{\alpha,\beta,*}=\sum_{t=1,*}^{q-1}\chi^{\beta}\left(g^{t}\right)\chi^{\alpha}\left(\bar{g}^{t}\right),$
where by $*$ we mean one of the three conditions. $\left(t,q\right)=1,$
or $\left(t,q_{1}\right)\neq1$ or $\left(t,q_{2}\right)\neq1.$ 
\item CASE $q=q_{1}:$ We need to evaluate $\tilde{C}_{G}^{\alpha,\beta}=\sum_{t=1}^{q}\chi^{\beta}\left(g^{t}\right)\chi^{\alpha}\left(\bar{g}^{t}\right).$
\item CASE $q=q_{1}^{m},m>1$: We need to evaluate $\tilde{C}_{G}^{\alpha,\beta,*}=\sum_{*}\chi^{\beta}\left(g^{t}\right)\chi^{\alpha}\left(\bar{g}^{t}\right),$
where by $*$ we mean summing $t'$ from $1$ to $q=q_{1}^{m'}-1$
with$\left(t',q\right)=1$ and $t=t'q_{1}^{m-m'}$, $m'=1,\ldots,m$
\end{itemize}
\end{rem}

Before proceding, we review the character of the action of $O\left(2n\right)$
on $\varLambda^{p}\left(\mathbb{R}^{2n}\right).$ This action is defined
by 
\[
A\cdot\left(v_{1}\wedge v_{2}\wedge\cdots\wedge v_{p}\right)=Av_{1}\wedge Av_{2}\wedge\cdots\wedge Av_{p}
\]
for all $A$ in $O\left(2n\right)$ and all basis vectors $v_{1}\wedge v_{2}\wedge\cdots\wedge v_{p}$
in $\varLambda^{p}\left(\mathbb{R}^{2n}\right).$

Using the fact that the character is just the trace of the action
which is just the sum of the eigenvalues, and knowing the explicit
formulation of the eigenvalues of $g$, a straightforward calculation
shows that 
\[
\chi^{\alpha}\left(\bar{g}^{t}\right)=\sum_{\bar{\sigma}}\exp\left(\frac{2\pi it}{q}\sum\bar{\sigma}\right)
\]
where $\bar{\sigma}$ runs over the $\left(\begin{array}{c}
2k\\
2k-\alpha
\end{array}\right)$ choices of subsets of order $2k-\alpha$ of the set 
\[
RPS_{q}:=RP_{q}-\left(\pm S\right).
\]
Likewise 
\[
\chi^{\beta}\left(g^{t}\right)=\sum_{\sigma}\exp\left(\frac{2\pi it}{q}\sum\sigma\right)
\]
where $\sigma$ runs over the $\left(\begin{array}{c}
2n\\
2n-\beta
\end{array}\right)$ choices of subsets of order $2n-\beta$ of the set 
\[
RPR_{q}:=RP_{q}-\left(\pm R\right).
\]

\begin{rem}
The argument above is fairly straightforward. A simple internet search
on ``symmetric and exterior power of representation'' yields a quite nice one. that argument I found.)
\end{rem}

\newpage Note that 
\begin{align*}
\tilde{C}_{G}^{\alpha,\beta,*}= & \sum_{*}\chi^{\beta}\left(g^{t}\right)\chi^{\alpha}\left(\bar{g}^{t}\right)=\sum_{*}\left(\sum_{\bar{\sigma}}\exp\left(\frac{2\pi it}{q}\sum\bar{\sigma}\right)\right)\left(\sum_{\sigma}\exp\left(\frac{2\pi it}{q}\sum\sigma\right)\right)\\
 & =\sum_{\sigma,\bar{\sigma}}\sum_{*}\exp\left(\frac{2\pi it}{q}\sum\left(\bar{\sigma}+\sigma\right)\right),
\end{align*}
where $*$ depends on the case in point.

For the case where $q=q_{1},$ with $q_{1}$an odd prime, we have
\begin{align*}
\tilde{C}_{G}^{\alpha,\beta}= & \sum_{t=1}^{q}\chi^{\beta}\left(g^{t}\right)\chi^{\alpha}\left(\bar{g}^{t}\right)=\sum_{t=1}^{q}\left(\sum_{\bar{\sigma}}\exp\left(\frac{2\pi it}{q}\sum\bar{\sigma}\right)\right)\left(\sum_{\sigma}\exp\left(\frac{2\pi it}{q}\sum\sigma\right)\right)\\
 & =\sum_{\sigma,\bar{\sigma}}\sum_{t=1}^{q}\exp\left(\frac{2\pi it}{q}\sum\left(\bar{\sigma}+\sigma\right)\right)=\textrm{subset sum}
\end{align*}
For $\ell\in\mathbb{Z},$ 
\[
\begin{cases}
\sum_{t=1}^{q}\exp\left(\frac{2\pi it}{q}\ell\right)=0 & \textrm{if }\left(\ell,q\right)=1,\\
\sum_{t=1}^{q}\exp\left(\frac{2\pi it}{q}\ell\right)=q & \textrm{if }\ell\in q\mathbb{Z}
\end{cases}.
\]
The first line follows from the fact that the sum of all $q$ of the
$qth$ roots of unity is $0.$ The second line follow from the fact
that $\exp\left(2\pi i\mathbb{Z}\right)\equiv1.$ We define 
\begin{align*}
C_{G}^{\alpha,\beta}= & \sum_{t=1}^{q}\chi^{\beta}\left(g^{t}\right)\chi^{\alpha}\left(\bar{g}^{t}\right)\\
= & q\left(\textrm{number of pairs of sets \ensuremath{\sigma,\bar{\sigma}}such that \ensuremath{\sum_{\sigma,\bar{\sigma}}\left(\bar{\sigma}+\sigma\right)\in q\mathbb{Z}}}\right)
\end{align*}
and
\begin{align*}
H_{G}^{p}\left(z\right)=\tilde{K}_{G}^{p}\left(z\right)= & \sum_{c=-p}^{2k}\left(-1\right)^{c}z^{c}\left[\sum_{a=\max\left(0,c\right)}^{\min\left(2k,c+p\right)}\sum_{t=1}^{q}\chi^{p-a+c}\left(g^{t}\right)\chi^{a}\left(\bar{g}^{t}\right)\right]\\
 & +\sum_{c=2}^{2k+p+2}\left(-1\right)^{c+1}z^{c}\left[\sum_{a=\max\left(0,c-2-p\right)}^{\min\left(2k,c-2\right)}\sum_{t=1}^{q}\chi^{p+2+a-c}\left(g^{t}\right)\chi^{a}\left(\bar{g}^{t}\right)\right]\\
= & \sum_{c=-p}^{2k}\left(-1\right)^{c}z^{c}\left[\sum_{a=\max\left(0,c\right)}^{\min\left(2k,c+p\right)}C_{G}^{a,p-a+c}\right]\\
 & +\sum_{c=2}^{2k+p+2}\left(-1\right)^{c+1}z^{c}\left[\sum_{a=\max\left(0,c-2-p\right)}^{\min\left(2k,c-2\right)}C_{G}^{a,p+2+a-c}\right]
\end{align*}
where $\bar{\sigma}$ runs over the $\left(\begin{array}{c}
2k\\
2k-\alpha
\end{array}\right)$ choices of subsets of order $2k-\alpha$ of the set 
\[
RPS_{q}:=RP_{q}-\left(\pm S\right)
\]
 and $\sigma$ runs over the $\left(\begin{array}{c}
2n\\
2n-\beta
\end{array}\right)$ choices of subsets of order $2n-\beta$ of the set 
\[
RPR_{q}:=RP_{q}-\left(\pm R\right).
\]

\begin{rem}
\end{rem}

Let $q=q_{1}q_{2,}$ where $q_{1}$ and $q_{2}$ are distinct odd
primes. 
\begin{align*}
\tilde{C}_{G}^{\alpha,\beta,q}= & \sum_{\sigma,\bar{\sigma}}\sum_{t=1,\left(t,q\right)=1\in}^{q-1}\exp\left(\frac{2\pi it}{q}\sum\left(\bar{\sigma}+\sigma\right)\right)\\
 & =\sum_{\sigma,\bar{\sigma}}\left[\sum_{t=1}^{q}\exp\left(\frac{2\pi it}{q}\sum\left(\bar{\sigma}+\sigma\right)\right)-\sum_{t=1,\left(t,q_{1}\right)\neq1}^{q-1}\exp\left(\frac{2\pi it}{q}\sum\left(\bar{\sigma}+\sigma\right)\right)\right.\\
 & =\left.\qquad-\sum_{t=1,\left(t,q_{2}\right)\neq1}^{q-1}\exp\left(\frac{2\pi it}{q}\sum\left(\bar{\sigma}+\sigma\right)\right)-1\right]\\
 & =\sum_{\sigma,\bar{\sigma}}\left[\sum_{t=1}^{q}\exp\left(\frac{2\pi it}{q}\sum\left(\bar{\sigma}+\sigma\right)\right)\right]-\tilde{C}_{G}^{\alpha,\beta,q_{1}}-\tilde{C}_{G}^{\alpha,\beta,q_{2}}-\left(\begin{array}{c}
2k\\
2k-\alpha
\end{array}\right)\left(\begin{array}{c}
2n\\
2n-\beta
\end{array}\right)\\
\tilde{C}_{G}^{\alpha,\beta,q_{1}}= & \sum_{\sigma,\bar{\sigma}}\sum_{t=1,\left(t,q_{1}\right)\neq1,}^{q-1}\exp\left(\frac{2\pi it}{q}\sum\left(\bar{\sigma}+\sigma\right)\right)\\
 & =\sum_{\sigma,\bar{\sigma}}\left[\sum_{t=1}^{q_{2}}\exp\left(\frac{2\pi itq_{1}}{q_{1}q_{2}}\sum\left(\bar{\sigma}+\sigma\right)\right)-1\right]\\
 & =\sum_{\sigma,\bar{\sigma}}\sum_{t=1}^{q_{2}}\exp\left(\frac{2\pi it}{q_{2}}\sum\left(\bar{\sigma}+\sigma\right)\right)-\left(\begin{array}{c}
2k\\
2k-\alpha
\end{array}\right)\left(\begin{array}{c}
2n\\
2n-\beta
\end{array}\right)\\
\tilde{C}_{G}^{\alpha,\beta,q_{2}}= & \sum_{\sigma,\bar{\sigma}}\sum_{t=1,\left(t,q_{2}\right)\neq1}^{q-1}\exp\left(\frac{2\pi it}{q}\sum\left(\bar{\sigma}+\sigma\right)\right)\\
 & =\sum_{\sigma,\bar{\sigma}}\sum_{t=1}^{q_{1}}\exp\left(\frac{2\pi it}{q_{1}}\sum\left(\bar{\sigma}+\sigma\right)\right)-\left(\begin{array}{c}
2k\\
2k-\alpha
\end{array}\right)\left(\begin{array}{c}
2n\\
2n-\beta
\end{array}\right)
\end{align*}
Note that the +1 comes from the fact that t=q isn't included in the
first line, but it is included in all 3 sums in the second line.

For $\ell\in\mathbb{Z},$ 
\[
\begin{cases}
\sum_{t=1}^{q}\exp\left(\frac{2\pi it}{q}\ell\right)=0 & \textrm{if }\left(\ell,q\right)=1,\\
\sum_{t=1}^{q}\exp\left(\frac{2\pi it}{q}\ell\right)=0 & \textrm{if }\left(\ell,q_{1}\right)\neq1\textrm{ and }\left(\ell,q_{2}\right)=1\\
\sum_{t=1}^{q}\exp\left(\frac{2\pi it}{q}\ell\right)=0 & \textrm{if }\left(\ell,q_{2}\right)\neq1\textrm{ and }\left(\ell,q_{1}\right)=1\\
\sum_{t=1}^{q}\exp\left(\frac{2\pi it}{q}\ell\right)=q & \textrm{if }\ell\in q_{1}q_{2}\mathbb{Z}
\end{cases}.
\]
The first line follows from the fact that the sum of all $q$ of the
$qth$ roots of unity is $0.$ The second line follow form the fact
that $\ell=q_{1}\ell'$ and still $\left(\ell',q_{2}\right)=1.$ Now
$\sum_{t=1}^{q_{1}q_{2}}\exp\left(\frac{2\pi it}{q_{2}}\ell'\right)=0$
since it is just $q_{1}$ sums of the $q_{2}^{th}$ roots of unity.
The third line is analogous to the second line. The last line follow
from the fact that $\exp\left(2\pi it\right)=1$ for all integers
$t.$ We thus define 
\begin{align*}
C_{G}^{\alpha,\beta,q_{i}}= & q_{j}\left(\textrm{number of pairs of sets \ensuremath{\sigma,\bar{\sigma}}such that \ensuremath{\sum_{\sigma,\bar{\sigma}}\left(\bar{\sigma}+\sigma\right)\in q_{j}\mathbb{Z}}}\right),i\neq j\\
C_{G}^{\alpha,\beta,q}= & q\left(\textrm{number of pairs of sets \ensuremath{\sigma,\bar{\sigma}}such that \ensuremath{\sum_{\sigma,\bar{\sigma}}\left(\bar{\sigma}+\sigma\right)\in q\mathbb{Z}}}\right)
\end{align*}
where $\bar{\sigma}$ runs over the $\left(\begin{array}{c}
2k\\
2k-\alpha
\end{array}\right)$ choices of subsets of order $2k-\alpha$ of the set 
\[
RPS_{q}:=RP_{q}-\left(\pm S\right)
\]
 and $\sigma$ runs over the $\left(\begin{array}{c}
2n\\
2n-\beta
\end{array}\right)$ choices of subsets of order $2n-\beta$ of the set 
\[
RPR_{q}:=RP_{q}-\left(\pm R\right).
\]

We have shown that equality of closed or coclosed $p$ forms for different
choices of $G,$ keeping $n,p,q$ constant, depends (iff) on equality
of the polynomial $H_{G}^{p}\left(z\right),$ where $H_{G}^{p}\left(z\right)$
is defined as:
\begin{align*}
 & H_{G}^{p}\left(z\right)=\\
 & \varPsi_{q_{1}}\left(z\right)^{\left(q_{2}-1\right)}\varPsi_{q_{2}}\left(z\right)^{\left(q_{1}-1\right)}\left[\sum_{c=-p}^{2k}\left(-1\right)^{c}z^{c}\sum_{a=\max\left(0,c\right)}^{\min\left(2k,c+p\right)}\left[C_{G}^{a,p-a+c,q}-C_{G}^{a,p-a+c,q_{1}}-C_{G}^{a,p-a+c,q_{2}}\right]\right.\\
 & \left.+\sum_{c=2}^{2k+p+2}\left(-1\right)^{c+1}z^{c}\sum_{a=\max\left(0,c-2-p\right)}^{\min\left(2k,c-2\right)}\left[C_{G}^{a,p+2+a-c,q}-C_{G}^{a,p+2+a-c,q_{1}}-C_{G}^{a,p+2+a-c,q_{2}}\right]\right]\\
 & +\varPsi_{q}\left(z\right)\varPsi_{q_{1}}\left(z\right)^{\left(q_{2}-1\right)}\left[\sum_{c=-p}^{2k}\left(-1\right)^{c}z^{c}\sum_{a=\max\left(0,c\right)}^{\min\left(2k,c+p\right)}C_{G}^{a,p-a+c,q_{1}}\right.\\
 & \left.+\sum_{c=2}^{2k+p+2}\left(-1\right)^{c+1}z^{c}\sum_{a=\max\left(0,c-2-p\right)}^{\min\left(2k,c-2\right)}\sum_{t=1,\left(t,q_{1}\right)\neq1}^{q}C_{G}^{a,p+2+a-c,q_{1}}\right]\\
 & +\varPsi_{q}\left(z\right)\varPsi_{q_{2}}\left(z\right)^{\left(q_{1}-1\right)}\left[\sum_{c=-p}^{2k}\left(-1\right)^{c}z^{c}\sum_{a=\max\left(0,c\right)}^{\min\left(2k,c+p\right)}C_{G}^{a,p-a+c,q_{2}}\right.\\
 & \left.+\sum_{c=2}^{2k+p+2}\left(-1\right)^{c+1}z^{c}\sum_{a=\max\left(0,c-2-p\right)}^{\min\left(2k,c-2\right)}C_{G}^{a,p+2+a-c,q_{2}}\right]
\end{align*}
which equals
\begin{align*}
 & H_{G}^{p}\left(z\right)=\\
 & \varPsi_{q_{1}}\left(z\right)^{\left(q_{2}-1\right)}\varPsi_{q_{2}}\left(z\right)^{\left(q_{1}-1\right)}\left[\sum_{c=-p}^{2k}\left(-1\right)^{c}z^{c}\sum_{a=\max\left(0,c\right)}^{\min\left(2k,c+p\right)}C_{G}^{a,p-a+c,q}\right.\\
 & \left.+\sum_{c=2}^{2k+p+2}\left(-1\right)^{c+1}z^{c}\sum_{a=\max\left(0,c-2-p\right)}^{\min\left(2k,c-2\right)}C_{G}^{a,p+2+a-c,q}\right]\\
 & +\left(\varPsi_{q}\left(z\right)\varPsi_{q_{1}}\left(z\right)^{\left(q_{2}-1\right)}-\varPsi_{q_{1}}\left(z\right)^{\left(q_{2}-1\right)}\varPsi_{q_{2}}\left(z\right)^{\left(q_{1}-1\right)}\right)\left[\sum_{c=-p}^{2k}\left(-1\right)^{c}z^{c}\sum_{a=\max\left(0,c\right)}^{\min\left(2k,c+p\right)}C_{G}^{a,p-a+c,q_{1}}\right.\\
 & \left.+\sum_{c=2}^{2k+p+2}\left(-1\right)^{c+1}z^{c}\sum_{a=\max\left(0,c-2-p\right)}^{\min\left(2k,c-2\right)}\sum_{t=1,\left(t,q_{1}\right)\neq1}^{q}C_{G}^{a,p+2+a-c,q_{1}}\right]\\
 & +\left(\varPsi_{q}\left(z\right)\varPsi_{q_{2}}\left(z\right)^{\left(q_{1}-1\right)}-\varPsi_{q_{1}}\left(z\right)^{\left(q_{2}-1\right)}\varPsi_{q_{2}}\left(z\right)^{\left(q_{1}-1\right)}\right)\left[\sum_{c=-p}^{2k}\left(-1\right)^{c}z^{c}\sum_{a=\max\left(0,c\right)}^{\min\left(2k,c+p\right)}C_{G}^{a,p-a+c,q_{2}}\right.\\
 & \left.+\sum_{c=2}^{2k+p+2}\left(-1\right)^{c+1}z^{c}\sum_{a=\max\left(0,c-2-p\right)}^{\min\left(2k,c-2\right)}C_{G}^{a,p+2+a-c,q_{2}}\right]
\end{align*}

Remark: We can ignore the components of the polynomials generated
by $\left(\begin{array}{c}
2k\\
2k-\alpha
\end{array}\right)\left(\begin{array}{c}
2n\\
2n-\beta
\end{array}\right),$ since these generate polynmials that depend only on $n,p,q$ and
are equal for distinct choices of $G.$\newpage{}

Letting $q=q_{1}^{2},$ where $q_{1}$is an odd proime. Then 
\begin{align*}
\tilde{C}_{G}^{\alpha,\beta,q_{1}^{2}}= & \sum_{\sigma,\bar{\sigma}}\sum_{t=1,\left(t,q\right)=1}^{q_{1}^{2}-1}\exp\left(\frac{2\pi it}{q}\sum\left(\bar{\sigma}+\sigma\right)\right)\\
 & =\sum_{\sigma,\bar{\sigma}}\left[\sum_{t=1}^{q_{1}^{2}}\exp\left(\frac{2\pi it}{q}\sum\left(\bar{\sigma}+\sigma\right)\right)-\sum_{t'=1}^{q_{1}}\exp\left(\frac{2\pi it'q_{1}}{q_{1}^{2}}\sum\left(\bar{\sigma}+\sigma\right)\right)\right]\\
 & =\sum_{\sigma,\bar{\sigma}}\left[\sum_{t=1}^{q}\exp\left(\frac{2\pi it}{q}\sum\left(\bar{\sigma}+\sigma\right)\right)\right]-\tilde{C}_{G}^{\alpha,\beta,q_{1}}\\
\tilde{C}_{G}^{\alpha,\beta,q_{1}}= & \sum_{\sigma,\bar{\sigma}}\sum_{t'=1,}^{q_{1}-1}\exp\left(\frac{2\pi it'q_{1}}{q_{1}^{2}}\sum\left(\bar{\sigma}+\sigma\right)\right)\\
 & =\sum_{\sigma,\bar{\sigma}}\sum_{t'=1}^{q_{1}}\exp\left(\frac{2\pi it'}{q_{1}}\sum\left(\bar{\sigma}+\sigma\right)\right)-\left(\begin{array}{c}
2k\\
2k-\alpha
\end{array}\right)\left(\begin{array}{c}
2n\\
2n-\beta
\end{array}\right)
\end{align*}
For $\ell\in\mathbb{Z},$ 
\[
\begin{cases}
\sum_{t=1}^{q_{1}^{2}}\exp\left(\frac{2\pi it}{q_{1}^{2}}\ell\right)=0 & \textrm{if }\left(\ell,q_{1}\right)=1,\\
\sum_{t=1}^{q_{1}^{2}}\exp\left(\frac{2\pi it}{q_{1}^{2}}\ell\right)=0 & \textrm{if }\ell\in q_{1}\mathbb{Z}\textrm{ and }\text{\ensuremath{\ell\notin q_{1}^{2}\mathbb{Z}}}\\
\sum_{t=1}^{q_{1}^{2}}\exp\left(\frac{2\pi it}{q_{1}^{2}}\ell\right)=q_{1}^{2} & \textrm{if }\ell\in q_{1}^{2}\mathbb{Z}
\end{cases}.
\]
The first line follows from the fact that the sum of all $q$ of the
$qth$ roots of unity is $0.$ The second line follow form the fact
that $\ell=q_{1}\ell'$ and still $\left(\ell',q_{1}\right)=1.$ Now
$\sum_{t=1}^{q_{1}^{2}}\exp\left(\frac{2\pi it}{q_{1}}\ell'\right)=0$
since it is just $q_{1}$ sums of the $q_{1}^{th}$ roots of unity.
The last line follow from the fact that $\exp\left(2\pi it\right)=1$
for all integers $t.$ We thus define
\begin{align*}
C_{G}^{\alpha,\beta,q_{1}}= & q_{1}\left(\textrm{number of pairs of sets \ensuremath{\sigma,\bar{\sigma}}such that \ensuremath{\sum_{\sigma,\bar{\sigma}}\left(\bar{\sigma}+\sigma\right)\in q_{1}\mathbb{Z}}}\right)\\
C_{G}^{\alpha,\beta,q_{1}^{2}}= & q_{1}^{2}\left(\textrm{number of pairs of sets \ensuremath{\sigma,\bar{\sigma}}such that \ensuremath{\sum_{\sigma,\bar{\sigma}}\left(\bar{\sigma}+\sigma\right)\in q_{1}^{2}\mathbb{Z}}}\right)\textrm{ }
\end{align*}
where $\bar{\sigma}$ runs over the $\left(\begin{array}{c}
2k\\
2k-\alpha
\end{array}\right)$ choices of subsets of order $2k-\alpha$ of the set 
\[
RPS_{q}:=RP_{q}-\left(\pm S\right)
\]
 and $\sigma$ runs over the $\left(\begin{array}{c}
2n\\
2n-\beta
\end{array}\right)$ choices of subsets of order $2n-\beta$ of the set 
\[
RPR_{q}:=RP_{q}-\left(\pm R\right).
\]
We have shown that equality of closed or coclosed $p$ forms for different
choices of $G,$ keeping $n,p,q$ constant, depends (iff) on equality
of the polynomial $H_{G}^{p}\left(z\right),$ where $H_{G}^{p}\left(z\right)$
is defined as:
\begin{align*}
H_{G}^{p}\left(z\right)= & \varPsi_{q_{1}}\left(z\right)^{q_{1}}\left[\sum_{c=-p}^{2k}\left(-1\right)^{c}z^{c}\sum_{a=\max\left(0,c\right)}^{\min\left(2k,c+p\right)}\left[C_{G}^{a,p-a+c,q_{1}^{2}}-C_{G}^{a,p-a+c,q_{1}}\right]\right.\\
 & \left.+\sum_{c=2}^{2k+p+2}\left(-1\right)^{c+1}z^{c}\sum_{a=\max\left(0,c-2-p\right)}^{\min\left(2k,c-2\right)}\left[C_{G}^{a,p+2+a-c,q_{1}^{2}}-C_{G}^{a,p+2+a-c,q_{1}}\right]\right]\\
 & +\varPsi_{q_{1}}\left(z^{q_{1}}\right)\left[\sum_{c=-p}^{2k}\left(-1\right)^{c}z^{c}\sum_{a=\max\left(0,c\right)}^{\min\left(2k,c+p\right)}C_{G}^{a,p-a+c,q_{1}}\right.\\
 & \left.+\sum_{c=2}^{2k+p+2}\left(-1\right)^{c+1}z^{c}\sum_{a=\max\left(0,c-2-p\right)}^{\min\left(2k,c-2\right)}C_{G}^{a,p+2+a-c,q_{1}}\right]
\end{align*}
which equals 
\begin{align*}
H_{G}^{p}\left(z\right)= & \varPsi_{q_{1}}\left(z\right)^{q_{1}}\left[\sum_{c=-p}^{2k}\left(-1\right)^{c}z^{c}\sum_{a=\max\left(0,c\right)}^{\min\left(2k,c+p\right)}C_{G}^{a,p-a+c,q_{1}^{2}}\right.\\
 & \left.+\sum_{c=2}^{2k+p+2}\left(-1\right)^{c+1}z^{c}\sum_{a=\max\left(0,c-2-p\right)}^{\min\left(2k,c-2\right)}C_{G}^{a,p+2+a-c,q_{1}^{2}}\right]\\
 & +\left(\varPsi_{q_{1}}\left(z^{q_{1}}\right)-\varPsi_{q_{1}}\left(z\right)^{q_{1}}\right)\left[\sum_{c=-p}^{2k}\left(-1\right)^{c}z^{c}\sum_{a=\max\left(0,c\right)}^{\min\left(2k,c+p\right)}C_{G}^{a,p-a+c,q_{1}}\right.\\
 & \left.+\sum_{c=2}^{2k+p+2}\left(-1\right)^{c+1}z^{c}\sum_{a=\max\left(0,c-2-p\right)}^{\min\left(2k,c-2\right)}C_{G}^{a,p+2+a-c,q_{1}}\right]
\end{align*}

\newpage Letting $q=q_{1}^{3},$ where $q_{1}$is an odd prime. Then
\begin{align*}
 & \tilde{C}_{G}^{\alpha,\beta,q_{1}^{3}}=\\
 & \sum_{\sigma,\bar{\sigma}}\sum_{t=1,\left(t,q_{1}\right)=1}^{q_{1}^{3}-1}\exp\left(\frac{2\pi it}{q}\sum\left(\bar{\sigma}+\sigma\right)\right)\\
 & =\sum_{\sigma,\bar{\sigma}}\left[\sum_{t=1}^{q_{1}^{3}}\exp\left(\frac{2\pi it}{q_{1}^{3}}\sum\left(\bar{\sigma}+\sigma\right)\right)-\sum_{t'=1,\left(t',q_{1}\right)\in\left\{ 1,q_{1}^{2}\right\} }^{q_{1}^{2}-1}\exp\left(\frac{2\pi it'q_{1}}{q_{1}^{3}}\sum\left(\bar{\sigma}+\sigma\right)\right)\right.\\
& \left. \qquad -\sum_{t'=1}^{q_{1}-1}\exp\left(\frac{2\pi it'q_{1}^{2}}{q_{1}^{3}}\sum\left(\bar{\sigma}+\sigma\right)\right)-1\right]\\
 & =\sum_{\sigma,\bar{\sigma}}\left[\sum_{t=1}^{q_{1}^{3}}\exp\left(\frac{2\pi it}{q_{1}^{3}}\sum\left(\bar{\sigma}+\sigma\right)\right)\right]-\tilde{C}_{G}^{\alpha,\beta,q_{1}^{2}}-\tilde{C}_{G}^{\alpha,\beta,q_{1}}-\left(\begin{array}{c}
2k\\
2k-\alpha
\end{array}\right)\left(\begin{array}{c}
2n\\
2n-\beta
\end{array}\right)\\
 & \tilde{C}_{G}^{\alpha,\beta,q_{1}^{2}}=\\
 & \sum_{\sigma,\bar{\sigma}}\sum_{t'=1,\left(t',q_{1}\right)\in\left\{ 1,q_{1}^{2}\right\} }^{q_{1}^{2}-1}\exp\left(\frac{2\pi it}{q}\sum\left(\bar{\sigma}+\sigma\right)\right)\\
 & =\sum_{\sigma,\bar{\sigma}}\left[\sum_{t=1}^{q_{1}^{2}}\exp\left(\frac{2\pi itq_{1}}{q_{1}^{3}}\sum\left(\bar{\sigma}+\sigma\right)\right)-\sum_{t'=1}^{q_{1}-1}\exp\left(\frac{2\pi it'q_{1}^{2}}{q_{1}^{3}}\sum\left(\bar{\sigma}+\sigma\right)\right)-1\right]\\
 & =\sum_{\sigma,\bar{\sigma}}\left[\sum_{t=1}^{q_{1}^{2}}\exp\left(\frac{2\pi it}{q_{1}^{2}}\sum\left(\bar{\sigma}+\sigma\right)\right)\right]-\tilde{C}_{G}^{\alpha,\beta,q_{1}}-\left(\begin{array}{c}
2k\\
2k-\alpha
\end{array}\right)\left(\begin{array}{c}
2n\\
2n-\beta
\end{array}\right)\\
 & \tilde{C}_{G}^{\alpha,\beta,q_{1}}=\\
 & \sum_{\sigma,\bar{\sigma}}\sum_{t'=1,}^{q_{1}-1}\exp\left(\frac{2\pi it'q_{1}^{2}}{q_{1}^{3}}\sum\left(\bar{\sigma}+\sigma\right)\right)\\
 & =\sum_{\sigma,\bar{\sigma}}\sum_{t'=1}^{q_{1}}\exp\left(\frac{2\pi it'}{q_{1}}\sum\left(\bar{\sigma}+\sigma\right)\right)-\left(\begin{array}{c}
2k\\
2k-\alpha
\end{array}\right)\left(\begin{array}{c}
2n\\
2n-\beta
\end{array}\right)
\end{align*}
For $\ell\in\mathbb{Z},$ 
\[
\begin{cases}
\sum_{t=1}^{q_{1}^{3}}\exp\left(\frac{2\pi it}{q_{1}^{3}}\ell\right)=0 & \textrm{if }\left(\ell,q_{1}\right)=1,\\
\sum_{t=1}^{q_{1}^{3}}\exp\left(\frac{2\pi it}{q_{1}^{3}}\ell\right)=0 & \textrm{if }\ell\in q_{1}\mathbb{Z}\textrm{ and }\text{\ensuremath{\ell\notin q_{1}^{2}\mathbb{Z}}}\\
\sum_{t=1}^{q_{1}^{3}}\exp\left(\frac{2\pi it}{q_{1}^{3}}\ell\right)=0 & \textrm{if }\ell\in q_{1}^{2}\mathbb{Z}\textrm{ and }\text{\ensuremath{\ell\notin q_{1}^{3}\mathbb{Z}}}\\
\sum_{t=1}^{q_{1}^{3}}\exp\left(\frac{2\pi it}{q_{1}^{3}}\ell\right)=q_{1}^{3} & \textrm{if }\ell\in q_{1}^{3}\mathbb{Z}
\end{cases}.
\]
The first line follows from the fact that the sum of all $q$ of the
$qth$ roots of unity is $0.$ The second line follows from the fact
that $\ell=q_{1}\ell'$ and still $\left(\ell',q_{1}\right)=1.$ Now
$\sum_{t=1}^{q_{1}^{3}}\exp\left(\frac{2\pi it}{q_{1}^{2}}\ell'\right)=0$
since it is just $q_{1}$ sums of the $q_{1}^{2th}$ roots of unity.
The third line follows from the fact that $\ell=q_{1}^{2}\ell'$ and
still $\left(\ell',q_{1}\right)=1.$ Now $\sum_{t=1}^{q_{1}^{3}}\exp\left(\frac{2\pi it}{q_{1}}\ell'\right)=0$
since it is just $q_{1}^{2}$ sums of the $q_{1}^{th}$ roots of unity.
The last line follow from the fact that $\exp\left(2\pi it\right)=1$
for all integers $t.$ We thus define 
\begin{align*}
C_{G}^{\alpha,\beta,q_{1}}= & q_{1}\left(\textrm{number of pairs of sets \ensuremath{\sigma,\bar{\sigma}}such that \ensuremath{\sum_{\sigma,\bar{\sigma}}\left(\bar{\sigma}+\sigma\right)\in q_{1}\mathbb{Z}}}\right)\\
C_{G}^{\alpha,\beta,q_{1}^{2}}= & q_{1}^{2}\left(\textrm{number of pairs of sets \ensuremath{\sigma,\bar{\sigma}}such that \ensuremath{\sum_{\sigma,\bar{\sigma}}\left(\bar{\sigma}+\sigma\right)\in q_{1}^{2}\mathbb{Z}}}\right)\textrm{ }\\
C_{G}^{\alpha,\beta,q_{1}^{3}}= & q_{1}^{3}\left(\textrm{number of pairs of sets \ensuremath{\sigma,\bar{\sigma}}such that \ensuremath{\sum_{\sigma,\bar{\sigma}}\left(\bar{\sigma}+\sigma\right)\in q_{1}^{3}\mathbb{Z}}}\right)\textrm{ }
\end{align*}
where $\bar{\sigma}$ runs over the $\left(\begin{array}{c}
2k\\
2k-\alpha
\end{array}\right)$ choices of subsets of order $2k-\alpha$ of the set 
\[
RPS_{q}:=RP_{q}-\left(\pm S\right)
\]
 and $\sigma$ runs over the $\left(\begin{array}{c}
2n\\
2n-\beta
\end{array}\right)$ choices of subsets of order $2n-\beta$ of the set 
\[
RPR_{q}:=RP_{q}-\left(\pm R\right).
\]
We have shown that equality of closed or coclosed $p$ forms for different
choices of $G,$ keeping $n,p,q$ constant, depends (iff) on equality
of the polynomial $H_{G}^{p}\left(z\right),$ where $H_{G}^{p}\left(z\right)$
is defined as:
\begin{align*}
H_{G}^{p}\left(z\right)= & \varPsi_{q_{1}}\left(z^{q_{1}}\right)^{q_{1}}\varPsi_{q_{1}}\left(z\right)^{q_{1}^{2}}\left[\sum_{c=-p}^{2k}\left(-1\right)^{c}z^{c}\sum_{a=\max\left(0,c\right)}^{\min\left(2k,c+p\right)}\left(C_{G}^{a,p-a+c,q_{1}^{3}}-C_{G}^{a,p-a+c,q_{1}^{2}}-C_{G}^{a,p-a+c,q_{1}}\right)\right.\\
 & \left.+\sum_{c=2}^{2k+p+2}\left(-1\right)^{c+1}z^{c}\sum_{a=\max\left(0,c-2-p\right)}^{\min\left(2k,c-2\right)}\left(C_{G}^{a,p+2+a-c,q_{1}^{3}}-C_{G}^{a,p+2+a-c,q_{1}^{2}}-C_{G}^{a,p+2+a-c,q_{1}}\right)\right]\\
 & +\varPsi_{q_{1}}\left(z^{q_{1}^{2}}\right)\varPsi_{q_{1}}\left(z\right)^{q_{1}^{2}}\left[\sum_{c=-p}^{2k}\left(-1\right)^{c}z^{c}\sum_{a=\max\left(0,c\right)}^{\min\left(2k,c+p\right)}\left(C_{G}^{a,p-a+c,q_{1}^{2}}-C_{G}^{a,p-a+c,q_{1}}\right)\right.\\
 & \left.+\sum_{c=2}^{2k+p+2}\left(-1\right)^{c+1}z^{c}\sum_{a=\max\left(0,c-2-p\right)}^{\min\left(2k,c-2\right)}\left(C_{G}^{a,p+2+a-c,q_{1}^{2}}-C_{G}^{a,p+2+a-c,q_{1}}\right)\right]\\
 & +\varPsi_{q_{1}}\left(z^{q_{1}^{2}}\right)\varPsi_{q_{1}}\left(z^{q_{1}}\right)^{q_{1}}\left[\sum_{c=-p}^{2k}\left(-1\right)^{c}z^{c}\sum_{a=\max\left(0,c\right)}^{\min\left(2k,c+p\right)}C_{G}^{a,p-a+c,q_{1}}\right.\\
 & \left.+\sum_{c=2}^{2k+p+2}\left(-1\right)^{c+1}z^{c}\sum_{a=\max\left(0,c-2-p\right)}^{\min\left(2k,c-2\right)}C_{G}^{a,p+2+a-c,q_{1}}\right]
\end{align*}
which equals 
\begin{align*}
H_{G}^{p}\left(z\right)= & \varPsi_{q_{1}}\left(z^{q_{1}}\right)^{q_{1}}\varPsi_{q_{1}}\left(z\right)^{q_{1}^{2}}\left[\sum_{c=-p}^{2k}\left(-1\right)^{c}z^{c}\sum_{a=\max\left(0,c\right)}^{\min\left(2k,c+p\right)}C_{G}^{a,p-a+c,q_{1}^{3}}\right.\\
 & \left.+\sum_{c=2}^{2k+p+2}\left(-1\right)^{c+1}z^{c}\sum_{a=\max\left(0,c-2-p\right)}^{\min\left(2k,c-2\right)}C_{G}^{a,p+2+a-c,q_{1}^{3}}\right]\\
 & +\left(\varPsi_{q_{1}}\left(z^{q_{1}^{2}}\right)\varPsi_{q_{1}}\left(z\right)^{q_{1}^{2}}-\varPsi_{q_{1}}\left(z^{q_{1}}\right)^{q_{1}}\varPsi_{q_{1}}\left(z\right)^{q_{1}^{2}}\right)\left[\sum_{c=-p}^{2k}\left(-1\right)^{c}z^{c}\sum_{a=\max\left(0,c\right)}^{\min\left(2k,c+p\right)}C_{G}^{a,p-a+c,q_{1}^{2}}\right.\\
 & \left.+\sum_{c=2}^{2k+p+2}\left(-1\right)^{c+1}z^{c}\sum_{a=\max\left(0,c-2-p\right)}^{\min\left(2k,c-2\right)}C_{G}^{a,p+2+a-c,q_{1}^{2}}\right]\\
 & +\left(\varPsi_{q_{1}}\left(z^{q_{1}^{2}}\right)\varPsi_{q_{1}}\left(z^{q_{1}}\right)^{q_{1}}-\varPsi_{q_{1}}\left(z^{q_{1}^{2}}\right)\varPsi_{q_{1}}\left(z\right)^{q_{1}^{2}}-\varPsi_{q_{1}}\left(z^{q_{1}}\right)^{q_{1}}\varPsi_{q_{1}}\left(z\right)^{q_{1}^{2}}\right)\times\\
& \left[\sum_{c=-p}^{2k}\left(-1\right)^{c}z^{c}\sum_{a=\max\left(0,c\right)}^{\min\left(2k,c+p\right)}C_{G}^{a,p-a+c,q_{1}}+\sum_{c=2}^{2k+p+2}\left(-1\right)^{c+1}z^{c}\sum_{a=\max\left(0,c-2-p\right)}^{\min\left(2k,c-2\right)}C_{G}^{a,p+2+a-c,q_{1}}\right]
\end{align*}

\begin{rem}
 Assuming here that $q=q_{1}q_{2}$, and we're summing
as in formula 4.10 in \cite{Ik1}. 
\[
\sum_{t=1}^{q}\zeta^{tx}
\]
where $\zeta=e^{2\pi i/q}$. In our case the sum is only over values
where the GCD $=1$. First, 
\begin{eqnarray}
\sum_{t=1,GCD(t,q)=1}^{q_{1}q_{2}}e^{(2\pi i/q)tx} & = & \mbox{ if \ensuremath{x=kq_{1}q_{2}}}\\
\sum_{t=1,GCD(t,q)=1}^{q_{1}q_{2}}e^{(2\pi ik)t} & = & \sum_{t=1,GCD(t,q)=1}^{q_{1}q_{2}}1^{t}=\phi(q)
\end{eqnarray}
So, for that sum, we count how many ways to add choices to 0 mod $q$,
and multiply by $\phi(q)$. Second, 
\begin{eqnarray}
\sum_{t=1,GCD(t,q_{1})\neq1}^{q_{1}q_{2}}e^{(2\pi i/q)tx} & = & \sum_{i=1}^{q_{2}}e^{(2\pi i/q_{2})x}=\mbox{since \ensuremath{t} is a multiple of \ensuremath{q_{1}}}\\
q_{2} & = & \mbox{if \ensuremath{x=kq_{2}} since \ensuremath{q_{2}} is prime and we're summing for 1 to \ensuremath{q2}}
\end{eqnarray}
So for this one, we count how many ways to add to 0 mod $q_{2}$,
and multiply by $q2$. Third, 
\begin{eqnarray}
\sum_{t=1,GCD(t,q_{2})\neq1}^{q_{1}q_{2}}e^{(2\pi i/q)tx} & = & \sum_{i=1}^{q_{1}}e^{(2\pi i/q_{1})x}=\mbox{since \ensuremath{t} is a multiple of \ensuremath{q_{2}}}\\
q_{1} & = & \mbox{if \ensuremath{x=kq_{1}} since \ensuremath{q_{1}} is prime and we're summing for 1 to \ensuremath{q_{1}}}
\end{eqnarray}
So we count how many ways to add to 0 mod $q_{1}$ and multiply by
$q_{1}$.
\end{rem}

\section{COMPUTATIONAL RESULTS}
The calculations involved in looking for lens spaces with unusual
isospectralities become quite a bit more involved than we previously thought. We modified our
original Mathematica code, and were able to carry out for some values
of $q$, but Mathematic is too slow to allow us to do any real calculations for reasonable values of $q$. Therefore,
we wrote compilable Swift code and used that to extend the calculations.
In the end, we obtained examples of everything we (mistakenly) thought
we had before, including examples of lens spaces isospectral on forms
but not functions.

The basis of all the calculations is considering the set of numbers
relatively prime to $q$, and splitting it into two sets, one of size
$2k$. As $k$ increases, the calculations increase in difficulty
and time quickly - in our original work we only considered $k=2,3$
and $q<100$. Our current work allowed us to increase $k$ to 7 for
$q<100$, and to do calculations for prime $q$ with $k\leq3$ up
to $q=200$. For composite $q$ we are still limited to $q<100$.

We list below examples of spaces which are isospectral on forms but
not functions, together with a few examples isospectral for functions
and some sporadic degree forms. We include the defining $2k$ tuples of numbers
relatively prime to $q$. Note that our most interesting examples
are when $q$ is prime, and hence could have been found with our original
work if our code had been quicker, and computers faster. We also note
that no examples of isospectrality on forms but not functions was
found for composite $q$, but that might simply be because we could
not get to high enough values of $q$ or $k$. We did find all sorts
of weird isospectrality in the composite case, there are many examples which look like the last line in the table.  
\footnotesize{
\begin{center}
\begin{tabular}{|c|c|c|c|c|}
\hline 
$q$ & $k$ & isosp for forms of degree & first set & second set\tabularnewline
\hline 
\hline 
$59$ & $5$ & $2$ & $[16,25,4,9,60,57,36,52,45,1]$ & $[19,22,25,55,39,60,6,36,1,42]$\tabularnewline
\hline 
$61$ & $5$ & $2$ & $[16,58,28,31,7,52,48,43,1,11]$ & $[56,17,19,32,58,40,27,3,1,42]$\tabularnewline
\hline 
$67$ & $5$ & $2$ & $[18,49,40,38,27,15,52,29,66,1]$ & $[12,17,55,60,40,27,7,50,66,1]$\tabularnewline
\hline 
$65$ & $3$ & $0,1,12$ & $[31,34,64,9,1,56]$ & $[36,41,29,24,64,1]$\tabularnewline
\hline 
\end{tabular}
\par\end{center}
}
\begin{rem}
We note that this work could be extended for $q$ a product of more
than 2 primes in an obvious, if messy, way. The end result of all
these calculation is that equality of the $C^{\alpha,\beta}$ from
\cite{GM} is replaced by equality of the $K^{p,q},K^{p,q1}$ and
$K^{p,q2}$ (see (\ref{Kdefns})). The subset sum calculations from
\cite{GM} are modified as follows.
\end{rem}

\section{Swift Code}

We include here swift code for two cases.  The first is for prime $q$.  The second is for $q$ which are the product of two primes.   Using the swift package BIGINT allows for computations with unlimited size integers.

\subsection{$q$ prime}
\tiny{
\begin{verbatim}


import Cocoa
import Foundation


var schoices: Set<Set<Int>> = [[]]
var schoicesarray: Array<Set<Int>> = []     // So I can find out what k-tuples match
var subsetlist: Array<Array<Int>>
var q: Int = 101
var qq: Int = 101  // for multiplying a Int value
var q0: Int = 50

var k: Int = 5
var n: Int = q0 - k
var relprime: Array<Int>
var i = 2


relprime = []

func max(_ num1: Int,_ num2: Int) -> Int {
    
    if num1 >= num2 {
        return num1
    } else {
        return num2
    }
}

func min(_ num1: Int,_ num2: Int) -> Int {
    
    if num1 <= num2 {
        return num1
    } else {
        return num2
    }
}

func GCD(_ first: Int, _ second: Int)->Int {
    var f = first
    var s = second
    
    while f>0 && s>0 {
        if f < s {
            s = s-f
        } else {
            f = f-s
        }
    }
    if f == 0 {
        return s
    } else {
        return f
    }
}

struct polynomial {  
    
    var coefs: [BigInt] = []
    var degree: Int {
        get {
            return coefs.count - 1
        }
    }
    
    func multiplypolys(_ poly1: polynomial) -> polynomial {
        
        var newpoly = polynomial()
        var degreetracker: Int = 0
        
        for multindex in 0...poly1.degree + self.degree {
            
            newpoly.coefs.append(0)
            
            while degreetracker <= multindex && degreetracker <= poly1.degree {  //multiply all possible combinations add to correct degree
                
                if multindex - degreetracker <= self.degree {
                    
                    newpoly.coefs[multindex] += (poly1.coefs[degreetracker]*self.coefs[multindex - degreetracker])
                }
                
                degreetracker += 1
                
            }
            
            degreetracker = 0
        }
        
        return newpoly
    }
    
    func addpolys(_ poly1: polynomial) -> polynomial{
        
        var newpoly = polynomial()
        
        for addindex in 0...max(poly1.degree,self.degree) {
            
            newpoly.coefs.append(0)
            
            if (addindex <= min(poly1.degree,self.degree)) {  // if can add both, do it, otherwise pick correct one
                
                newpoly.coefs[addindex] = (poly1.coefs[addindex] + self.coefs[addindex])
                
            } else if addindex <= poly1.degree && addindex > self.degree {
                
                newpoly.coefs[addindex] = poly1.coefs[addindex]
                
            } else if addindex > poly1.degree && addindex <= self.degree {
                
                newpoly.coefs[addindex] = self.coefs[addindex]
                
            }
        }
        return newpoly
    }
}

struct polynomialmodq {
    
    var coefs: [BigInt] = []
    var degree: Int {
        get {
            return coefs.count - 1
        }
    }
    
    func multiplypolys(_ poly1: polynomialmodq) -> polynomialmodq {  // returns the product with powers mod q
        
        var newpoly = polynomialmodq()
        var degreetracker: Int = 0
        var multiplier: Int = 1     // to get multiples of numbers to combine powers mod q
        
        for multindex in 0...poly1.degree + self.degree {
            
            newpoly.coefs.append(0)
            
            while degreetracker <= multindex && degreetracker <= poly1.degree {  //multiply all possible combinations add to correct degree
                
                if multindex - degreetracker <= self.degree {
                    
                    newpoly.coefs[multindex] += (poly1.coefs[degreetracker]*self.coefs[multindex - degreetracker])
                    
                }
                
                degreetracker += 1
                
            }
            
            degreetracker = 0
        }
        
        // knock powers >= q down and add them in
        
        if newpoly.degree >= q {
            
            for powerindex in 0...q - 1 {
                
                while multiplier*q + powerindex <= newpoly.degree {
                    
                    newpoly.coefs[powerindex % q] += newpoly.coefs[multiplier*q + powerindex]
                    multiplier += 1
                    
                }
                multiplier = 1
            }
            
            for powerindex in q...newpoly.degree {      // kill everything from x^q on
                
                newpoly.coefs.removeLast()
                
            }
            
        }
        
        return newpoly
    }
    
    func addpolys(_ poly1: polynomialmodq) -> polynomialmodq {        // returns the sum with powers mod q
        
        var newpoly = polynomialmodq()
        var multiplier: Int = 1     // to get multiples of numbers to combine powers mod q
        
        for addindex in 0...max(poly1.degree,self.degree) {
            
            newpoly.coefs.append(0)
            
            if (addindex <= min(poly1.degree,self.degree)) {  // if can add both, do it, otherwise pick correct one
                
                newpoly.coefs[addindex] = (poly1.coefs[addindex] + self.coefs[addindex])
                
            } else if addindex <= poly1.degree && addindex > self.degree {
                
                newpoly.coefs[addindex] = poly1.coefs[addindex]
                
            } else if addindex > poly1.degree && addindex <= self.degree {
                
                newpoly.coefs[addindex] = self.coefs[addindex]
                
            }
        }
        
        
        // knock powers >= q down and add them in
        
        if newpoly.degree >= q {
            
            for powerindex in 0...q - 1 {
                
                while multiplier*q + powerindex <= newpoly.degree {
                    
                    newpoly.coefs[powerindex % q] += newpoly.coefs[multiplier*q + powerindex]
                    multiplier += 1
                    
                }
                multiplier = 1
            }
            
            for powerindex in q...newpoly.degree {      // kill everything from x^q on
                
                newpoly.coefs.removeLast()
                
            }
            
        }
        return newpoly
    }
}

var poly1 = polynomialmodq(coefs: [1])      // the poly = 1
var poly0 = polynomialmodq(coefs: [0])      // the poly = 0

struct doublepolynomial { // polynomials with polynomial coefficients, powers mod q
    
    var coefs: [polynomialmodq] = []
    var degree: Int {
        get {
            return coefs.count - 1
        }
    }
    
    func multiplydoublepolys(_ poly1: doublepolynomial) -> doublepolynomial {
        
        var newpoly = doublepolynomial()
        var degreetracker: Int = 0
        let zeropoly = polynomialmodq(coefs: [0])
        
        for multindex in 0...poly1.degree + self.degree {
            
            newpoly.coefs.append(zeropoly)
            
            while degreetracker <= multindex && degreetracker <= poly1.degree {
                
                if multindex - degreetracker <= self.degree {
                    
                    newpoly.coefs[multindex] = newpoly.coefs[multindex].addpolys(poly1.coefs[degreetracker].multiplypolys(self.coefs[multindex - degreetracker]))       // add all possible products for each degree
                    
                }
                
                degreetracker += 1
                
            }
            
            degreetracker = 0
        }
        
        return newpoly
    }
    
    func multiplybybinomial(_ xpower: Int) -> doublepolynomial {  //  FIX, need to foil for y coefs
        
        // multiply this double by 1+x^n*y, for speed
        
        var arraytopadwith: [BigInt] = []   // for padding front of poly to multiply by x^n
        var bufferpoly = polynomialmodq()   //for holding a copy of the coef polynomial which can be padded
        var newdoublepoly = doublepolynomial()
        
        for powerindex in 1...xpower {  // set up array to multiply by x^powerindex
            
            arraytopadwith.append(0)
            
        }
        
        newdoublepoly.coefs.append(self.coefs[0])      //first coef doesn't change
        
        if self.degree != 0 {for polyindex in 1...self.degree {
            
            bufferpoly = self.coefs[polyindex - 1]      //new coef is current one plus x^n times previous one
            bufferpoly.coefs.insert(contentsOf: arraytopadwith, at: 0)
            newdoublepoly.coefs.append(bufferpoly.addpolys(self.coefs[polyindex]))
            
            }}
        
        // and finally add the last coef
        
        bufferpoly = self.coefs[self.degree]
        bufferpoly.coefs.insert(contentsOf: arraytopadwith, at: 0)
        newdoublepoly.coefs.append(bufferpoly.addpolys(poly0))      // knock x powers down mod q
        
        return newdoublepoly
        
    }
}



struct etapart {
    
    var powers: [Int]
    var coefs: [BigInt]
    
    func addetaparts(_ toaddto: etapart) -> etapart {
        
        var newetaparts = etapart(powers: [], coefs: [])
        var indexofmatching: Int?
        
        for firstpowerindex in 0...self.powers.count - 1 {
            
            newetaparts.powers.append(self.powers[firstpowerindex])     // add first power to powers and first coef to coefs
            newetaparts.coefs.append(self.coefs[firstpowerindex])
            
        }
        
        
        for secondpowerindex in 0...toaddto.powers.count - 1 {
            
            if newetaparts.powers.contains(toaddto.powers[secondpowerindex]) {  // if second power already there
                
                indexofmatching = newetaparts.powers.index(of: toaddto.powers[secondpowerindex])
                
                newetaparts.coefs[indexofmatching!] += toaddto.coefs[secondpowerindex] // add coef to existing one
                
            } else {
                
                newetaparts.powers.append(toaddto.powers[secondpowerindex])     // otherwise just append it with coef
                newetaparts.coefs.append(toaddto.coefs[secondpowerindex])
                
            }
            
        }
        
        // sort powers and coefs based on powers
        
        let combined = zip(newetaparts.powers, newetaparts.coefs).sorted {$0.0 < $1.0}
        
        newetaparts.powers = combined.map {$0.0}
        newetaparts.coefs = combined.map {$0.1}
        
        return newetaparts
        
    }
}

while i <= q/2 {
    if GCD(i,q) == 1 {relprime.append(i)}
    i += 1
}

func subsets(_ source: [Int], takenBy : Int) -> [[Int]] {
    if(source.count == takenBy) {
        return [source]
    }
    
    if(source.isEmpty) {
        return []
    }
    
    if(takenBy == 0) {
        return []
    }
    
    if(takenBy == 1) {
        return source.map { [$0] }
    }
    
    var result : [[Int]] = []
    
    let rest = Array(source.suffix(from: 1))
    let sub_combos = subsets(rest, takenBy: takenBy - 1)
    result += sub_combos.map { [source[0]] + $0 }
    
    result += subsets(rest, takenBy: takenBy)
    
    return result
}

subsetlist = subsets(relprime,takenBy:k-1)
subsetlist.count


var nextschoice: Set<Int> = [1]    //next array to add to schoices
var subsetlistindex: Int = 0
var subsetindex: Int = 0

while subsetlistindex < subsetlist.count {
    while subsetindex < subsetlist[subsetlistindex].count {
        nextschoice.insert(subsetlist[subsetlistindex][subsetindex])
        nextschoice.insert(q - subsetlist[subsetlistindex][subsetindex])
        subsetindex += 1
    }
    nextschoice.insert(q - 1)
    schoices.insert(nextschoice)
    nextschoice = [1]
    subsetindex = 0
    subsetlistindex += 1
}
print("\(schoices.count)\n")

var mult: Int = 2
var currentschoice: Set<Int>
var currentschoicedup: Set<Int>
var donemultiplying: Bool = false

func multsetby(_ settomult: Set<Int>,multby: Int) -> Set<Int> {
    var setelement: Int = 0
    var localsettomult: Set<Int> = []
    
    for setelement in settomult {
        localsettomult.insert(multby*setelement % q)
    }
    
    return localsettomult
}



for currentschoice in schoices {
    
    if schoices.contains(currentschoice) {
        
        while mult < q/2 && !donemultiplying {
            
            currentschoicedup = multsetby(currentschoice, multby: mult)
            if schoices.contains(currentschoicedup) {
                schoices.remove(currentschoicedup)
                donemultiplying = true
            }
            mult += 1
        }
        mult = 2
        donemultiplying = false
    }
}


print("\(schoices.count)")

var complete: Set<Int> = [1,q-1]
var s: Set<Int>
var r: Set<Int>
var S = Array<Array<BigInt>>()
var R = Array<Array<BigInt>>()
S = Array(repeating: Array(repeating: 0, count: q), count: 2*k + 1)
R = Array(repeating: Array(repeating: 0, count: 2*n + 1), count: q)
var choices: Array<Array<Int>>
var pick:Int
var Gs = doublepolynomial(coefs: [poly1])
var Gr = doublepolynomial(coefs: [poly1])
var p = polynomialmodq()    //for picking off the coeffiecient polynomials from Gs and Gr

var sizes: Int = 2*k
var sizer: Int = 2*n

for relprimeint in relprime {
    complete.insert(relprimeint)
    complete.insert(q-relprimeint)
}

func totalset(_ settoadd: Array<Int>) -> Int {
    var total: Int = 0
    
    for number in settoadd {
        total += number
    }
    
    return total
}

func countoccurrences(_ inarray: [Int],_ whattofind: Int) -> Int {
    var currentcount: Int = 0
    
    for idx in inarray {
        if idx == whattofind {currentcount += 1}
    }
    
    return currentcount
}

func powint(_ base: BigInt,power: Int) -> BigInt {
    
    var answer: BigInt = 1
    
    if power != 0 {
        for _ in 1...power {
            
            answer *= base
            
        }
    }
    
    return answer
}

func compareetarows(_ etatocheck: Array<Array<etapart>>, row1: Int, row2: Int) -> [Bool] {  // check two rows of eta for equal values, return true if equal for given p (column)
    
    var part1: etapart
    var part2: etapart
    var comparelist: [Bool] = []
    
    for pindex in 0...n {
        
        part1 = etatocheck[row1][pindex]
        part2 = etatocheck[row2][pindex]
        
        if part1.coefs == part2.coefs && part1.powers == part2.powers {comparelist.append(true)} else {comparelist.append(false)}
        
    }
    
    return comparelist
}



var C = Array<Array<BigInt>>()
C = Array(repeating: Array(repeating: 0, count: sizer + 1), count: sizes + 1)
var rowtimescolumntotal: BigInt = 0
var sindex: Int = 0
var eta = Array(repeating: Array(repeating: etapart(powers: [0], coefs: [0]), count: n + 1), count: schoices.count)  //this is the identity for the etapart add
var newetapart = etapart(powers: [], coefs: [])


for s in schoices {  // loop through valid k-tuples
    
    r = complete.subtracting(s)
    
    for sint in s {
        
        Gs = Gs.multiplybybinomial(sint)
        
    }
    
    for rint in r {
        
        Gr = Gr.multiplybybinomial(rint)

    }
    
    for pick in 0...sizes {
        
        if Gs.coefs.count > pick {p = Gs.coefs[pick]} else {p = poly0}
        for sidx in 1...q {
            if p.coefs.count >= sidx {
                S[pick][sidx - 1] = p.coefs[sidx - 1]
            }
        }
    }
    
    for pick in 0...sizer {
        if Gr.coefs.count > pick {p = Gr.coefs[pick]} else {p = poly0}
        R[0][pick] = p.coefs[0]     //first R is the coef of x^0
        for ridx in 1...(q - 1) {   //the rest load backwards
            if p.coefs.count > ridx {
                R[q - ridx][pick] = p.coefs[ridx % q]
            }
        }
    }
    
    
    
    
    Gs.coefs = [poly1]
    Gr.coefs = [poly1]
    
    for rowinfirst in 0...sizes {
        for columninsecond in 0...sizer {
            
            rowtimescolumntotal = 0
            
            for j in 0...q-1 {
                
                rowtimescolumntotal += S[rowinfirst][j] * R[j][columninsecond]
            }
            
            C[rowinfirst][columninsecond] =  qq * rowtimescolumntotal
            
        }
    }
    

    
    for p in 0...n {
        for a in 0...2*k {
            for t in 0...p {
                
                newetapart = etapart(powers: [a - t,a + t + 2], coefs: [powint(-1,power: t+a) * C[a][p - t],-powint(-1,power: t+a) * C[a][p - t]])
                
                eta[sindex][p] = eta[sindex][p].addetaparts(newetapart)
                
            }
        }
    }
    
    schoicesarray.append(s)     // build array with k tuples in same order for later
    sindex += 1
    print(sindex)
}

var rowcompare: [Bool]
var matchlist: [[Int]] = []
var inequalrun: Bool = false        // set to true if I'm in a run of equal etaparts
var howmanyruns: Int = 0
var runs: Array<Int> = []
var runindex: Int = 0
var nonzerocount: Int = 0

for i in 0...schoices.count - 2 {
    
    for j in i + 1...schoices.count - 1 {
        
        rowcompare = compareetarows(eta, row1: i, row2: j)
        
        for kk in 0...rowcompare.count - 1 {
            
            if !inequalrun && rowcompare[kk] {
                
                matchlist.append([kk,0])
                howmanyruns += 1
                inequalrun = true
                
            } else if inequalrun && !rowcompare[kk] {
                
                matchlist[howmanyruns - 1][1] = kk - 1
                inequalrun = false
                
            }
        }
        
        while runindex < matchlist.count {
            
            runs.append(matchlist[runindex][1] - matchlist[runindex][0])
            if runs[runindex] > 0 {nonzerocount += 1}
            runindex += 1
            
            
        }
        
        if nonzerocount > 1 || (nonzerocount == 1 && matchlist[0][0] > 0) {print(matchlist, schoicesarray[i],schoicesarray[j])}
        
        howmanyruns = 0
        matchlist = []
        inequalrun = false
        runs = []
        runindex = 0
        nonzerocount = 0
    }
}
\end{verbatim}
\subsection{$q$ product of two primes}
\begin{verbatim}
//
//  main.swift
//  lens spaces product two primes
//
//  Created by Jeffrey Mcgowan on 10/21/16.




import Cocoa
import Foundation


var schoices: Set<Set<Int>> = [[]]
var schoicesarray: Array<Set<Int>> = []     // So I can find out what k-tuples match
var subsetlist: Array<Array<Int>>
var q: Int = 62
var qq: Int = 62  // for multiplying a Int value
var q1: Int = 2
var q2: Int = 31
var q0: Int = 15
var EulerPhiq = 30


var k: Int = 5
var n: Int = q0 - k
var relprime: Array<Int>
var i = 2
var prime = [29,31,37,41,43,47]

func max(_ num1: Int,_ num2: Int) -> Int {
    
    if num1 >= num2 {
        return num1
    } else {
        return num2
    }
}

func min(_ num1: Int,_ num2: Int) -> Int {
    
    if num1 <= num2 {
        return num1
    } else {
        return num2
    }
}

func GCD(_ first: Int, _ second: Int)->Int {
    var f = first
    var s = second
    
    while f>0 && s>0 {
        if f < s {
            s = s-f
        } else {
            f = f-s
        }
    }
    if f == 0 {
        return s
    } else {
        return f
    }
}

struct polynomial {
    
    var coefs: [Int] = []
    var degree: Int {
        get {
            return coefs.count - 1
        }
    }
    
    func multiplypolys(_ poly1: polynomial) -> polynomial {
        
        var newpoly = polynomial()
        var degreetracker: Int = 0
        
        for multindex in 0...poly1.degree + self.degree {
            
            newpoly.coefs.append(0)
            
            while degreetracker <= multindex && degreetracker <= poly1.degree {  //multiply all possible combinations add to correct degree
                
                if multindex - degreetracker <= self.degree {
                    
                    newpoly.coefs[multindex] += (poly1.coefs[degreetracker]*self.coefs[multindex - degreetracker])
                }
                
                degreetracker += 1
                
            }
            
            degreetracker = 0
        }
        
        return newpoly
    }
    
    func addpolys(_ poly1: polynomial) -> polynomial{
        
        var newpoly = polynomial()
        
        for addindex in 0...max(poly1.degree,self.degree) {
            
            newpoly.coefs.append(0)
            
            if (addindex <= min(poly1.degree,self.degree)) {  // if can add both, do it, otherwise pick correct one
                
                newpoly.coefs[addindex] = (poly1.coefs[addindex] + self.coefs[addindex])
                
            } else if addindex <= poly1.degree && addindex > self.degree {
                
                newpoly.coefs[addindex] = poly1.coefs[addindex]
                
            } else if addindex > poly1.degree && addindex <= self.degree {
                
                newpoly.coefs[addindex] = self.coefs[addindex]
                
            }
        }
        return newpoly
    }
}

var Cyclotomicq = polynomial(coefs: [1,-1,1,-1,1,-1,1,-1,1,-1,1,-1,1,-1,1,-1,1,-1,1,-1,1,-1,1,-1,1,-1,1,-1,1,-1,1])
var Cyclotomicq1 = polynomial(coefs: [1,1])
var Cyclotomicq2 = polynomial(coefs: [1,1,1,1,1,1,1,1,1,1,1,1,1,1,1,1,1,1,1,1,1,1,1,1,1,1,1,1,1,1,1])
var Cyclotomicq1q2 = (Cyclotomicq1.multiplypolys(Cyclotomicq2))
var Cyclotomicqq2 = (Cyclotomicq.multiplypolys(Cyclotomicq2))
var Cyclotomicq1q = (Cyclotomicq1.multiplypolys(Cyclotomicq))


struct polynomialmodq {
    
    var coefs: [Int] = []
    var degree: Int {
        get {
            return coefs.count - 1
        }
    }
    
    func multiplypolys(_ poly1: polynomialmodq) -> polynomialmodq {  // returns the product with powers mod q
        
        var newpoly = polynomialmodq()
        var degreetracker: Int = 0
        var multiplier: Int = 1     // to get multiples of numbers to combine powers mod q
        
        for multindex in 0...poly1.degree + self.degree {
            
            newpoly.coefs.append(0)
            
            while degreetracker <= multindex && degreetracker <= poly1.degree {  //multiply all possible combinations add to correct degree
                
                if multindex - degreetracker <= self.degree {
                    
                    newpoly.coefs[multindex] += (poly1.coefs[degreetracker]*self.coefs[multindex - degreetracker])
                    
                }
                
                degreetracker += 1
                
            }
            
            degreetracker = 0
        }
        
        // knock powers >= q down and add them in
        
        if newpoly.degree >= q {
            
            for powerindex in 0...q - 1 {
                
                while multiplier*q + powerindex <= newpoly.degree {
                    
                    newpoly.coefs[powerindex % q] += newpoly.coefs[multiplier*q + powerindex]
                    multiplier += 1
                    
                }
                multiplier = 1
            }
            
            for powerindex in q...newpoly.degree {      // kill everything from x^q on
                
                newpoly.coefs.removeLast()
                
            }
            
        }
        
        return newpoly
    }
    
    func addpolys(_ poly1: polynomialmodq) -> polynomialmodq {        // returns the sum with powers mod q
        
        var newpoly = polynomialmodq()
        var multiplier: Int = 1     // to get multiples of numbers to combine powers mod q
        
        for addindex in 0...max(poly1.degree,self.degree) {
            
            newpoly.coefs.append(0)
            
            if (addindex <= min(poly1.degree,self.degree)) {  // if can add both, do it, otherwise pick correct one
                
                newpoly.coefs[addindex] = (poly1.coefs[addindex] + self.coefs[addindex])
                
            } else if addindex <= poly1.degree && addindex > self.degree {
                
                newpoly.coefs[addindex] = poly1.coefs[addindex]
                
            } else if addindex > poly1.degree && addindex <= self.degree {
                
                newpoly.coefs[addindex] = self.coefs[addindex]
                
            }
        }
        
        
        // knock powers >= q down and add them in
        
        if newpoly.degree >= q {
            
            for powerindex in 0...q - 1 {
                
                while multiplier*q + powerindex <= newpoly.degree {
                    
                    newpoly.coefs[powerindex % q] += newpoly.coefs[multiplier*q + powerindex]
                    multiplier += 1
                    
                }
                multiplier = 1
            }
            
            for powerindex in q...newpoly.degree {      // kill everything from x^q on
                
                newpoly.coefs.removeLast()
                
            }
            
        }
        return newpoly
    }
}

struct polynomialmodq1 {
    
    var coefs: [Int] = []
    var degree: Int {
        get {
            return coefs.count - 1
        }
    }
    
    func multiplypolys(_ poly1: polynomialmodq1) -> polynomialmodq1 {  // returns the product with powers mod q
        
        var newpoly = polynomialmodq1()
        var degreetracker: Int = 0
        var multiplier: Int = 1     // to get multiples of numbers to combine powers mod q1
        
        for multindex in 0...poly1.degree + self.degree {
            
            newpoly.coefs.append(0)
            
            while degreetracker <= multindex && degreetracker <= poly1.degree {  //multiply all possible combinations add to correct degree
                
                if multindex - degreetracker <= self.degree {
                    
                    newpoly.coefs[multindex] += (poly1.coefs[degreetracker]*self.coefs[multindex - degreetracker])
                    
                }
                
                degreetracker += 1
                
            }
            
            degreetracker = 0
        }
        
        // knock powers >= q1 down and add them in
        
        if newpoly.degree >= q1 {
            
            for powerindex in 0...q1 - 1 {
                
                while multiplier*(q1) + powerindex <= newpoly.degree {
                    
                    newpoly.coefs[powerindex % q1] += newpoly.coefs[multiplier*q1 + powerindex]
                    multiplier += 1
                    
                }
                multiplier = 1
            }
            
            for powerindex in q1...newpoly.degree {      // kill everything from x^q1 on
                
                newpoly.coefs.removeLast()
                
            }
            
        }
        
        return newpoly
    }
    
    func addpolys(_ poly1: polynomialmodq1) -> polynomialmodq1 {        // returns the sum with powers mod q1
        
        var newpoly = polynomialmodq1()
        var multiplier: Int = 1     // to get multiples of numbers to combine powers mod q1
        
        for addindex in 0...max(poly1.degree,self.degree) {
            
            newpoly.coefs.append(0)
            
            if (addindex <= min(poly1.degree,self.degree)) {  // if can add both, do it, otherwise pick correct one
                
                newpoly.coefs[addindex] = (poly1.coefs[addindex] + self.coefs[addindex])
                
            } else if addindex <= poly1.degree && addindex > self.degree {
                
                newpoly.coefs[addindex] = poly1.coefs[addindex]
                
            } else if addindex > poly1.degree && addindex <= self.degree {
                
                newpoly.coefs[addindex] = self.coefs[addindex]
                
            }
        }
        
        
        // knock powers >= q1 down and add them in
        
        if newpoly.degree >= q1 {
            
            for powerindex in 0...q1 - 1{
                
                while multiplier*q1 + powerindex <= newpoly.degree {
                    
                    newpoly.coefs[powerindex % q1] += newpoly.coefs[multiplier*q1 + powerindex]
                    multiplier += 1
                    
                }
                multiplier = 1
            }
            
            for powerindex in q1...newpoly.degree {      // kill everything from x^q on
                
                newpoly.coefs.removeLast()
                
            }
            
        }
        return newpoly
    }
}

struct polynomialmodq2 {
    
    var coefs: [Int] = []
    var degree: Int {
        get {
            return coefs.count - 1
        }
    }
    
    func multiplypolys(_ poly1: polynomialmodq2) -> polynomialmodq2 {  // returns the product with powers mod q
        
        var newpoly = polynomialmodq2()
        var degreetracker: Int = 0
        var multiplier: Int = 1     // to get multiples of numbers to combine powers mod q2
        
        for multindex in 0...poly1.degree + self.degree {
            
            newpoly.coefs.append(0)
            
            while degreetracker <= multindex && degreetracker <= poly1.degree {  //multiply all possible combinations add to correct degree
                
                if multindex - degreetracker <= self.degree {
                    
                    newpoly.coefs[multindex] += (poly1.coefs[degreetracker]*self.coefs[multindex - degreetracker])
                    
                }
                
                degreetracker += 1
                
            }
            
            degreetracker = 0
        }
        
        // knock powers >= q2 down and add them in
        
        if newpoly.degree >= q2 {
            
            for powerindex in 0...q2 - 1 {
                
                while multiplier*q2 + powerindex <= newpoly.degree {
                    
                    newpoly.coefs[powerindex % q2] += newpoly.coefs[multiplier*q2 + powerindex]
                    multiplier += 1
                    
                }
                multiplier = 1
            }
            
            for powerindex in q2...newpoly.degree {      // kill everything from x^q on
                
                newpoly.coefs.removeLast()
                
            }
            
        }
        
        return newpoly
    }
    
    func addpolys(_ poly1: polynomialmodq2) -> polynomialmodq2 {        // returns the sum with powers mod q2
        
        var newpoly = polynomialmodq2()
        var multiplier: Int = 1     // to get multiples of numbers to combine powers mod q2
        
        for addindex in 0...max(poly1.degree,self.degree) {
            
            newpoly.coefs.append(0)
            
            if (addindex <= min(poly1.degree,self.degree)) {  // if can add both, do it, otherwise pick correct one
                
                newpoly.coefs[addindex] = (poly1.coefs[addindex] + self.coefs[addindex])
                
            } else if addindex <= poly1.degree && addindex > self.degree {
                
                newpoly.coefs[addindex] = poly1.coefs[addindex]
                
            } else if addindex > poly1.degree && addindex <= self.degree {
                
                newpoly.coefs[addindex] = self.coefs[addindex]
                
            }
        }
        
        
        // knock powers >= q2 down and add them in
        
        if newpoly.degree >= q2 {
            
            for powerindex in 0...q2 - 1 {
                
                while multiplier*q2 + powerindex <= newpoly.degree {
                    
                    newpoly.coefs[powerindex % q2] += newpoly.coefs[multiplier*q2 + powerindex]
                    multiplier += 1
                    
                }
                multiplier = 1
            }
            
            for powerindex in q2...newpoly.degree {      // kill everything from x^q on
                
                newpoly.coefs.removeLast()
                
            }
            
        }
        return newpoly
    }
}

var poly1 = polynomialmodq(coefs: [1])      // the poly = 1
var poly0 = polynomialmodq(coefs: [0])      // the poly = 0
var poly11 = polynomialmodq1(coefs: [1])      // the poly = 1
var poly01 = polynomialmodq1(coefs: [0])      // the poly = 0
var poly12 = polynomialmodq2(coefs: [1])      // the poly = 1
var poly02 = polynomialmodq2(coefs: [0])      // the poly = 0

struct doublepolynomial { // polynomials with polynomial coefficients, powers mod q
    
    var coefs: [polynomialmodq] = []
    var degree: Int {
        get {
            return coefs.count - 1
        }
    }
    
    func multiplydoublepolys(_ poly: doublepolynomial) -> doublepolynomial {
        
        var newpoly = doublepolynomial()
        var degreetracker: Int = 0
        let zeropoly = polynomialmodq(coefs: [0])
        
        for multindex in 0...poly.degree + self.degree {
            
            newpoly.coefs.append(zeropoly)
            
            while degreetracker <= multindex && degreetracker <= poly.degree {
                
                if multindex - degreetracker <= self.degree {
                    
                    newpoly.coefs[multindex] = newpoly.coefs[multindex].addpolys(poly.coefs[degreetracker].multiplypolys(self.coefs[multindex - degreetracker]))       // add all possible products for each degree
                    
                }
                
                degreetracker += 1
                
            }
            
            degreetracker = 0
        }
        
        return newpoly
    }
    
    func multiplybybinomial(_ xpower: Int) -> doublepolynomial {  //  FIX, need to foil for y coefs
        
        // multiply this double by 1+x^n*y, for speed
        
        var arraytopadwith: [Int] = []   // for padding front of poly to multiply by x^n
        var bufferpoly = polynomialmodq()   //for holding a copy of the coef polynomial which can be padded
        var newdoublepoly = doublepolynomial()
        
        for powerindex in 1...xpower {  // set up array to multiply by x^powerindex
            
            arraytopadwith.append(0)
            
        }
        
        newdoublepoly.coefs.append(self.coefs[0])      //first coef doesn't change
        
        if self.degree != 0 {for polyindex in 1...self.degree {
            
            bufferpoly = self.coefs[polyindex - 1]      //new coef is current one plus x^n times previous one
            bufferpoly.coefs.insert(contentsOf: arraytopadwith, at: 0)
            newdoublepoly.coefs.append(bufferpoly.addpolys(self.coefs[polyindex]))
            
            }}
        
        // and finally add the last coef
        
        bufferpoly = self.coefs[self.degree]
        bufferpoly.coefs.insert(contentsOf: arraytopadwith, at: 0)
        newdoublepoly.coefs.append(bufferpoly.addpolys(poly0))      // knock x powers down mod q
        
        return newdoublepoly
        
    }
}

struct doublepolynomialq1 { // polynomials with polynomial coefficients, powers mod q1
    
    var coefs: [polynomialmodq1] = []
    var degree: Int {
        get {
            return coefs.count - 1
        }
    }
    
    func multiplydoublepolys(_ poly: doublepolynomialq1) -> doublepolynomialq1 {
        
        var newpoly = doublepolynomialq1()
        var degreetracker: Int = 0
        let zeropoly = polynomialmodq1(coefs: [0])
        
        for multindex in 0...poly.degree + self.degree {
            
            newpoly.coefs.append(zeropoly)
            
            while degreetracker <= multindex && degreetracker <= poly.degree {
                
                if multindex - degreetracker <= self.degree {
                    
                    newpoly.coefs[multindex] = newpoly.coefs[multindex].addpolys(poly.coefs[degreetracker].multiplypolys(self.coefs[multindex - degreetracker]))       // add all possible products for each degree
                    
                }
                
                degreetracker += 1
                
            }
            
            degreetracker = 0
        }
        
        return newpoly
    }
    
    func multiplybybinomial(_ xpower: Int) -> doublepolynomialq1 {  //  FIX, need to foil for y coefs
        
        // multiply this double by 1+x^n*y, for speed
        
        var arraytopadwith: [Int] = []   // for padding front of poly to multiply by x^n
        var bufferpoly = polynomialmodq1()   //for holding a copy of the coef polynomial which can be padded
        var newdoublepoly = doublepolynomialq1()
        
        for powerindex in 1...xpower {  // set up array to multiply by x^powerindex
            
            arraytopadwith.append(0)
            
        }
        
        newdoublepoly.coefs.append(self.coefs[0])      //first coef doesn't change
        
        if self.degree != 0 {for polyindex in 1...self.degree {
            
            bufferpoly = self.coefs[polyindex - 1]      //new coef is current one plus x^n times previous one
            bufferpoly.coefs.insert(contentsOf: arraytopadwith, at: 0)
            newdoublepoly.coefs.append(bufferpoly.addpolys(self.coefs[polyindex]))
            
            }}
        
        // and finally add the last coef
        
        bufferpoly = self.coefs[self.degree]
        bufferpoly.coefs.insert(contentsOf: arraytopadwith, at: 0)
        newdoublepoly.coefs.append(bufferpoly.addpolys(poly01))      // knock x powers down mod q
        
        return newdoublepoly
        
    }
}

struct doublepolynomialq2 { // polynomials with polynomial coefficients, powers mod q2
    
    var coefs: [polynomialmodq2] = []
    var degree: Int {
        get {
            return coefs.count - 1
        }
    }
    
    func multiplydoublepolys(_ poly: doublepolynomialq2) -> doublepolynomialq2 {
        
        var newpoly = doublepolynomialq2()
        var degreetracker: Int = 0
        let zeropoly = polynomialmodq2(coefs: [0])
        
        for multindex in 0...poly.degree + self.degree {
            
            newpoly.coefs.append(zeropoly)
            
            while degreetracker <= multindex && degreetracker <= poly.degree {
                
                if multindex - degreetracker <= self.degree {
                    
                    newpoly.coefs[multindex] = newpoly.coefs[multindex].addpolys(poly.coefs[degreetracker].multiplypolys(self.coefs[multindex - degreetracker]))       // add all possible products for each degree
                    
                }
                
                degreetracker += 1
                
            }
            
            degreetracker = 0
        }
        
        return newpoly
    }
    
    func multiplybybinomial(_ xpower: Int) -> doublepolynomialq2 {  //  FIX, need to foil for y coefs
        
        // multiply this double by 1+x^n*y, for speed
        
        var arraytopadwith: [Int] = []   // for padding front of poly to multiply by x^n
        var bufferpoly = polynomialmodq2()   //for holding a copy of the coef polynomial which can be padded
        var newdoublepoly = doublepolynomialq2()
        
        for powerindex in 1...xpower {  // set up array to multiply by x^powerindex
            
            arraytopadwith.append(0)
            
        }
        
        newdoublepoly.coefs.append(self.coefs[0])      //first coef doesn't change
        
        if self.degree != 0 {for polyindex in 1...self.degree {
            
            bufferpoly = self.coefs[polyindex - 1]      //new coef is current one plus x^n times previous one
            bufferpoly.coefs.insert(contentsOf: arraytopadwith, at: 0)
            newdoublepoly.coefs.append(bufferpoly.addpolys(self.coefs[polyindex]))
            
            }}
        
        // and finally add the last coef
        
        bufferpoly = self.coefs[self.degree]
        bufferpoly.coefs.insert(contentsOf: arraytopadwith, at: 0)
        newdoublepoly.coefs.append(bufferpoly.addpolys(poly02))      // knock x powers down mod q
        
        return newdoublepoly
        
    }
}


struct etapart {
    
    var powers: [Int]
    var coefs: [Int]
    
    func addetaparts(_ toaddto: etapart) -> etapart {
        
        var newetaparts = etapart(powers: [], coefs: [])
        var indexofmatching: Int?
        
        for firstpowerindex in 0...self.powers.count - 1 {
            
            newetaparts.powers.append(self.powers[firstpowerindex])     // add first power to powers and first coef to coefs
            newetaparts.coefs.append(self.coefs[firstpowerindex])
            
        }
        
        
        for secondpowerindex in 0...toaddto.powers.count - 1 {
            
            if newetaparts.powers.contains(toaddto.powers[secondpowerindex]) {  // if second power already there
                
                indexofmatching = newetaparts.powers.index(of: toaddto.powers[secondpowerindex])
                
                newetaparts.coefs[indexofmatching!] += toaddto.coefs[secondpowerindex] // add coef to existing one
                
            } else {
                
                newetaparts.powers.append(toaddto.powers[secondpowerindex])     // otherwise just append it with coef
                newetaparts.coefs.append(toaddto.coefs[secondpowerindex])
                
            }
            
        }
        
        // sort powers and coefs based on powers
        
        let combined = zip(newetaparts.powers, newetaparts.coefs).sorted {$0.0 < $1.0}
        
        newetaparts.powers = combined.map {$0.0}
        newetaparts.coefs = combined.map {$0.1}
        
        return newetaparts
        
    }
}



var nextschoice: Set<Int> = [1]    //next array to add to schoices
var subsetlistindex: Int = 0
var subsetindex: Int = 0

var mult: Int = 2
var currentschoice: Set<Int>
var currentschoicedup: Set<Int>
var donemultiplying: Bool = false

var complete: Set<Int> = [1,q-1]
var s: Set<Int>
var r: Set<Int>
var S = Array<Array<Int>>()
var R = Array<Array<Int>>()
var S1 = Array<Array<Int>>()
var R1 = Array<Array<Int>>()
var S2 = Array<Array<Int>>()
var R2 = Array<Array<Int>>()
S = Array(repeating: Array(repeating: 0, count: q), count: 2*k + 1)
R = Array(repeating: Array(repeating: 0, count: 2*n + 1), count: q)
S1 = Array(repeating: Array(repeating: 0, count: q2), count: 2*k + 1)
R1 = Array(repeating: Array(repeating: 0, count: 2*n + 1), count: q2)
S2 = Array(repeating: Array(repeating: 0, count: q1), count: 2*k + 1)
R2 = Array(repeating: Array(repeating: 0, count: 2*n + 1), count: q1)
var choices: Array<Array<Int>>
var pick:Int
var Gs = doublepolynomial(coefs: [poly1])
var Gr = doublepolynomial(coefs: [poly1])
var Gs1 = doublepolynomialq1(coefs: [poly11])
var Gr1 = doublepolynomialq1(coefs: [poly11])
var Gs2 = doublepolynomialq2(coefs: [poly12])
var Gr2 = doublepolynomialq2(coefs: [poly12])
var p = polynomialmodq()    //for picking off the coeffiecient polynomials from Gs and Gr
var p1 = polynomialmodq1()    //for picking off the coeffiecient polynomials from Gs and Gr
var p2 = polynomialmodq2()    //for picking off the coeffiecient polynomials from Gs and Gr

var sizes: Int = 2*k
var sizer: Int = 2*n

var C = Array<Array<Int>>()
C = Array(repeating: Array(repeating: 0, count: sizer + 1), count: sizes + 1)
var C1 = Array<Array<Int>>()
C1 = Array(repeating: Array(repeating: 0, count: sizer + 1), count: sizes + 1)
var C2 = Array<Array<Int>>()
C2 = Array(repeating: Array(repeating: 0, count: sizer + 1), count: sizes + 1)
var rowtimescolumntotal: Int = 0
var sindex: Int = 0
var newetapart = polynomial(coefs: [])

var rowcompare: [Bool]
var matchlist: [[Int]] = []
var inequalrun: Bool = false        // set to true if I'm in a run of equal etaparts
var howmanyruns: Int = 0

func subsets(_ source: [Int], takenBy : Int) -> [[Int]] {
    if(source.count == takenBy) {
        return [source]
    }
    
    if(source.isEmpty) {
        return []
    }
    
    if(takenBy == 0) {
        return []
    }
    
    if(takenBy == 1) {
        return source.map { [$0] }
    }
    
    var result : [[Int]] = []
    
    let rest = Array(source.suffix(from: 1))
    let sub_combos = subsets(rest, takenBy: takenBy - 1)
    result += sub_combos.map { [source[0]] + $0 }
    
    result += subsets(rest, takenBy: takenBy)
    
    return result
}

func multsetby(_ settomult: Set<Int>,multby: Int) -> Set<Int> {
    var setelement: Int = 0
    var localsettomult: Set<Int> = []
    
    for setelement in settomult {
        localsettomult.insert(multby*setelement % q)
    }
    
    return localsettomult
}
func totalset(_ settoadd: Array<Int>) -> Int {
    var total: Int = 0
    
    for number in settoadd {
        total += number
    }
    
    return total
}

func countoccurrences(_ inarray: [Int],_ whattofind: Int) -> Int {
    var currentcount: Int = 0
    
    for idx in inarray {
        if idx == whattofind {currentcount += 1}
    }
    
    return currentcount
}

func powint(_ base: Int,power: Int) -> Int {
    
    var answer: Int = 1
    
    if power != 0 {
        for _ in 1...power {
            
            answer *= base
            
        }
    }
    
    return answer
}

func compareetarows(_ etatocheck: Array<Array<polynomial>>, row1: Int, row2: Int) -> [Bool] {  // check two rows of eta for equal values, return true if equal for given p (column)
    
    var part1: polynomial
    var part2: polynomial
    var comparelist: [Bool] = []
    
    for pindex in 0...n {
        
        part1 = etatocheck[row1][pindex]
        part2 = etatocheck[row2][pindex]
        
        if part1.coefs == part2.coefs {comparelist.append(true)} else {comparelist.append(false)}
        
    }
    
    return comparelist
}

relprime = []

while i <= q/2 {
    if GCD(i,q) == 1 {relprime.append(i)}
    i += 1
}


subsetlist = subsets(relprime,takenBy:k-1)
subsetlist.count

while subsetlistindex < subsetlist.count {
    while subsetindex < subsetlist[subsetlistindex].count {
        nextschoice.insert(subsetlist[subsetlistindex][subsetindex])
        nextschoice.insert(q - subsetlist[subsetlistindex][subsetindex])
        subsetindex += 1
    }
    nextschoice.insert(q - 1)
    schoices.insert(nextschoice)
    nextschoice = [1]
    subsetindex = 0
    subsetlistindex += 1
}
print("\(q) , \(schoices.count)\n")



for currentschoice in schoices {
    
    if schoices.contains(currentschoice) {
        
        while mult < q/2 && !donemultiplying {
            
            currentschoicedup = multsetby(currentschoice, multby: mult)
            if schoices.contains(currentschoicedup) {
                schoices.remove(currentschoicedup)
                donemultiplying = true
            }
            mult += 1
        }
        mult = 2
        donemultiplying = false
    }
}


print("\(schoices.count)")


for relprimeint in relprime {
    complete.insert(relprimeint)
    complete.insert(q-relprimeint)
}

var eta = Array(repeating: Array(repeating: polynomial(coefs: [0]), count: n + 1), count: schoices.count)

for s in schoices {  // loop through valid k-tuples
    
    r = complete.subtracting(s)
    
    for sint in s {
        
        Gs = Gs.multiplybybinomial(sint)
        Gs1 = Gs1.multiplybybinomial(sint)
        Gs2 = Gs2.multiplybybinomial(sint)
        
    }
    
    for rint in r {
        
        Gr = Gr.multiplybybinomial(rint)
        Gr1 = Gr1.multiplybybinomial(rint)
        Gr2 = Gr2.multiplybybinomial(rint)
        
    }
    
    for pick in 0...sizes {
        
        if Gs.coefs.count > pick {p = Gs.coefs[pick]} else {p = poly0}
        for sidx in 1...q {
            if p.coefs.count >= sidx {
                S[pick][sidx - 1] = p.coefs[sidx - 1]
            }
        }
    }
    
    for pick in 0...sizer {
        if Gr.coefs.count > pick {p = Gr.coefs[pick]} else {p = poly0}
        R[0][pick] = p.coefs[0]     //first R is the coef of x^0
        for ridx in 1...(q - 1) {   //the rest load backwards
            if p.coefs.count > ridx {
                R[q - ridx][pick] = p.coefs[ridx % q]
            }
        }
    }
    
    
    for pick in 0...sizes {
        
        if Gs1.coefs.count > pick {p1 = Gs1.coefs[pick]} else {p1 = poly01}
        for sidx in 1...q2 {
            if p1.coefs.count >= sidx {
                S1[pick][sidx - 1] = p1.coefs[sidx - 1]
            }
        }
    }
    
    for pick in 0...sizer {
        if Gr1.coefs.count > pick {p1 = Gr1.coefs[pick]} else {p1 = poly01}
        R[0][pick] = p1.coefs[0]     //first R is the coef of x^0
        for ridx in 1...(q2 - 1) {   //the rest load backwards
            if p1.coefs.count > ridx {
                R1[q2 - ridx][pick] = p1.coefs[ridx % q2]
            }
        }
    }
    
    for pick in 0...sizes {
        
        if Gs2.coefs.count > pick {p2 = Gs2.coefs[pick]} else {p2 = poly02}
        for sidx in 1...q1 {
            if p2.coefs.count >= sidx {
                S2[pick][sidx - 1] = p2.coefs[sidx - 1]
            }
        }
    }
    
    for pick in 0...sizer {
        if Gr2.coefs.count > pick {p2 = Gr2.coefs[pick]} else {p2 = poly02}
        R[0][pick] = p2.coefs[0]     //first R is the coef of x^0
        for ridx in 1...(q1 - 1) {   //the rest load backwards
            if p2.coefs.count > ridx {
                R2[q1 - ridx][pick] = p2.coefs[ridx % q1]
            }
        }
    }
    
    
    
    
    Gs.coefs = [poly1]
    Gr.coefs = [poly1]
    
    for rowinfirst in 0...sizes {
        for columninsecond in 0...sizer {
            
            rowtimescolumntotal = 0
            
            for j in 0...q-1 {
                
                rowtimescolumntotal += S[rowinfirst][j] * R[j][columninsecond]
            }
            
            C[rowinfirst][columninsecond] =  EulerPhiq * rowtimescolumntotal
            
        }
    }
    
    for rowinfirst in 0...sizes {
        for columninsecond in 0...sizer {
            
            rowtimescolumntotal = 0
            
            for j in 0...q2-1 {
                
                rowtimescolumntotal += S1[rowinfirst][j] * R1[j][columninsecond]
            }
            
            C1[rowinfirst][columninsecond] =  q2 * rowtimescolumntotal
            
        }
    }
    
    for rowinfirst in 0...sizes {
        for columninsecond in 0...sizer {
            
            rowtimescolumntotal = 0
            
            for j in 0...q1-1 {
                
                rowtimescolumntotal += S2[rowinfirst][j] * R2[j][columninsecond]
            }
            
            C2[rowinfirst][columninsecond] =  q1 * rowtimescolumntotal
            
        }
    }
    
    
    
    for p in 0...n {
        for a in 0...2*k {
            for t in 0...p {
                
                newetapart = polynomial(coefs:Cyclotomicq1q2.coefs.map {$0 * C[a][p-t]})
                eta[sindex][p] = eta[sindex][p].addpolys(newetapart)
                newetapart = polynomial(coefs:Cyclotomicq1q.coefs.map {$0 * C[a][p-t]})
                eta[sindex][p] = eta[sindex][p].addpolys(newetapart)
                newetapart = polynomial(coefs:Cyclotomicqq2.coefs.map {$0 * C[a][p-t]})
                eta[sindex][p] = eta[sindex][p].addpolys(newetapart)
                
            }
        }
    }
    
    schoicesarray.append(s)     // build array with k tuples in same order for later
    sindex += 1
    print(sindex)
}

var runs: Array<Int> = []
var runindex: Int = 0
var nonzerocount: Int = 0


for i in 0...schoices.count - 2 {
    
    for j in i + 1...schoices.count - 1 {
        
        rowcompare = compareetarows(eta, row1: i, row2: j)
        
        for kk in 0...rowcompare.count - 1 {
            
            if !inequalrun && rowcompare[kk] {
                
                matchlist.append([kk,0])
                howmanyruns += 1
                inequalrun = true
                
            } else if inequalrun && !rowcompare[kk] {
                
                matchlist[howmanyruns - 1][1] = kk - 1
                inequalrun = false
                
            }
        }
        
        while runindex < matchlist.count {
            
            runs.append(matchlist[runindex][1] - matchlist[runindex][0])
            if runs[runindex] > 0 {nonzerocount += 1}
            runindex += 1
            
            
        }
        
        if nonzerocount > 1 || (nonzerocount == 1 && matchlist[0][0] > 0) {print(matchlist, schoicesarray[i],schoicesarray[j])}
        
        howmanyruns = 0
        matchlist = []
        inequalrun = false
        runs = []
        runindex = 0
        nonzerocount = 0
    }
}
\end{verbatim}
}

On behalf of all authors, the corresponding author states that there is no conflict of interest. 

\bibliographystyle{amsplain}

\end{document}